\newtheorem{thm}{Theorem}[section] 
\newtheorem{prop}[thm]{Proposition}
\newtheorem{cor}[thm]{Corollary} 
\newtheorem{lem}[thm]{Lemma}
\theoremstyle{definition} 
\newtheorem{rem}[thm]{Remark}
\newtheorem*{ackn}{Acknowledgment}
\numberwithin{equation}{section}
\newcommand{\skal}[2]{\langle #1,#2\rangle}
\begin{document}

\title{On the frame bundle adapted to a submanifold}
\author{Kamil Niedzia\l omski}

\subjclass[2000]{53C10; 53C24; 53A30}
\keywords{Riemannian manifold, orthonormal frame bundle, minimal submanifold, harmonic map, Gauss map}

\address{
Department of Mathematics and Computer Science \endgraf
University of \L\'{o}d\'{z} \endgraf
ul. Banacha 22, 90-238 \L\'{o}d\'{z} \endgraf
Poland
}
\email{kamiln@math.uni.lodz.pl}

\begin{abstract}
Let $M$ be a submanifold of a Riemannian manifold $(N,g)$. $M$ induces a subbundle $O(M,N)$ of adapted frames over $M$ of the bundle of orthonormal frames $O(N)$. Riemannian metric $g$ induces natural metric on $O(N)$. We study the geometry of a submanifold $O(M,N)$ in $O(N)$. We characterize the horizontal distribution of $O(M,N)$ and state its correspondence with the horizontal lift in $O(N)$ induced by the Levi--Civita connection on $N$.  

In the case of extrinsic geometry, we show that minimality is equivalent to harmonicity of the Gauss map of the submanifold $M$ with deformed Riemannian metric. In the case of intrinsic geometry we compute the curvatures and compare this geometry with the geometry of $M$.
\end{abstract}

\maketitle

\section{Introduction}

It is a basic fact that the reduction of the connection on the frame bundle $L(N)$ to the frame bundle $O(N)$ of orthonormal frames over a Riemannian manifold on $N$ is equivalent with the condition for the connection $\nabla$ on $N$ to be Riemannian. 

If we consider a submanifold $M$ of $N$ we may introduce the bundle $O(M,N)$ of adapted frames, which is a subbundle of $O(N)|M$. It appears that the Levi--Civita connection on $N$ is not reducible to $O(M,N)$. In fact the connection
\begin{equation*}
\nabla '_XY=(\nabla_XY^{\top})^{\top}+(\nabla_X Y^{\bot})^{\bot}
\end{equation*}
on $TN|M$ is reducible to $O(M,N)$. We define the horizontal distribution on $O(M,N)$ induced by $\nabla '$. We show the relation between horizontal lifts $X^{h'}$ and $X^h$ to $O(M,N)$ and to $O(N)$, induced by $\nabla '$ and $\nabla$, respectively. The correspondence relies on the difference
\begin{equation*}
S_XY=\nabla_X Y-\nabla '_XY=(\nabla_XY^{\top})^{\bot}+(\nabla_X Y^{\bot})^{\top}=\Pi(X,Y^{\top})-A_{Y^{\bot}}(X),
\end{equation*}
where $\Pi$ denotes the second fundamental form of $M$ and $A_V$ the Weingarten operator in the normal direction $V$. It is not surprising, since the complement $\mathfrak{m}$ of the Lie algebra $\mathfrak{h}=o(n)\oplus o(p)$ of structure group of $O(M,N)$ in the Lie algebra $\mathfrak{g}=o(n+p)$ of the structure group of $O(N)$ consists of block matrices of the form
\begin{equation*}
\left( \begin{array}{cc} 0 & A \\ -A^{\top} & 0 \end{array} \right),
\end{equation*}  
where we put $n=\dim M$, $n+p=\dim N$.

We study the intrinsic and extrinsic geometry of $O(M,N)$ in $O(N)$. In order to do it we equip $O(N)$, and more generally $L(N)$, with the Sasaki--Mok metric induced by the Riemannian metric on $N$. This metric agrees with the canonical Riemannian metric on $O(N)$ defined on the vertical subspace with the use of the Killing form on the Lie algebra $o(n+p)$.

We begin by introducing in the frame bundle $L(N)$ special vertical vector fields 
$\bar{T}$ induced from the tensor fields $T:TN\to TN$. These vector fields generate the space of all invariant vertical vector fields and they are tangent to $O(N)$ if and only if tensors $T$ are skew--symmetric. Then the Levi--Civita connection of $O(N)$ on these vector fields is just the bi--invariant connection, namely
\begin{equation*}
\nabla^{O(M)}_{\bar{T}} \bar{T'}=\frac{1}{2}[\bar{T},\bar{T'}].
\end{equation*} 

For the extrinsic geometry, notice that the quotient $O(n+p)/(O(n)\oplus O(p))$ of structure groups of $O(N)$ and $O(M,N)$, respectively, is the fiber of the Grassmann bundle $Gr_n(N)$ of $n$--dimensional subspaces of $TN$. The tangent space of this quotient is the complement of $o(n)\oplus o(p)$ in $o(n+p)$ and, simultaneously, the transversal of $TO(M,N)$ in $TO(N)$. This suggests the correspondence between the extrinsic geometry of a submanifold $O(M,N)$ in $O(N)$ and the geometry of $Gr_n(N)$.  

Indeed, the main theorem gives the characterization of minimality of $O(M,N)$, which is equivalent to the harmonicity of the Gauss map of a submanifold with respect modified Riemannian metric
\begin{equation*}
\tilde{g}(X,Y)=g(PX,Y),
\end{equation*}
where
\begin{equation*}
P(X)=X-2\sum_A S_{e_A}^2(X),\quad X\in TM.
\end{equation*}

\begin{thm}
Submanifold $O(M,N)$ is minimal in $O(N)$ if and only if the Gauss map $\gamma:(M,\tilde{g})\to (Gr_p(N),g_{Gr})$ of the submanifold $M\subset N$ is harmonic, where $g_{Gr}$ is the Riemannian metric induced from the Riemannian metric $g$ on $M$.
\end{thm}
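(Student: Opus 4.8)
The plan is to compute the mean curvature vector $H$ of $O(M,N)$ inside $(O(N),\skal{\cdot}{\cdot})$ explicitly and to show that its vanishing is equivalent, through the natural projection onto the Grassmann bundle, to the vanishing of the tension field of $\gamma$. First I would fix the tangent space: since $\nabla'$ is reducible to $O(M,N)$, the bundle $TO(M,N)$ is spanned by the horizontal lifts $X^{h'}$, $X\in TM$, together with the invariant vertical fields $\bar A$, $A\in\mathfrak h=o(n)\oplus o(p)$, and by the displayed formula for $S$ one has $X^{h'}=X^h+\overline{S_X}$, where $\overline{S_X}$ is vertical and $\mathfrak m$-valued. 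Here $S_X$ is skew-symmetric by the Weingarten relation $g(\Pi(X,Y^\top),Z^\bot)=g(A_{Z^\bot}X,Y^\top)$, hence $\overline{S_X}$ is indeed tangent to $O(N)$. Using that the Sasaki--Mok metric makes horizontal and vertical orthogonal and restricts to the trace form on the vertical part, I would check that the two families are mutually orthogonal, since $\skal{X^{h'}}{\bar A}=\skal{S_X}{A}=0$ because $S_X\in\mathfrak m$ and $A\in\mathfrak h$.

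A key preliminary point is that the metric induced on the horizontal family is exactly $\tilde g$. Indeed $\skal{X^{h'}}{Y^{h'}}=g(X,Y)+\skal{S_X}{S_Y}$, and computing the trace form over an orthonormal basis of $T_xN$ together with the identity $S^2_{e_A}X=-A_{\Pi(e_A,X)}e_A$ turns $\skal{S_X}{S_Y}$ into $-2\sum_A g(S^2_{e_A}X,Y)$, i.e. $\tilde g(X,Y)=g(PX,Y)$. This explains the occurrence of $\tilde g$ and of the factor two in $P$, and it means that tracing over an orthonormal basis of the $X^{h'}$-directions is tracing with respect to $\tilde g$. Next, from $\nabla^{O(N)}_{\bar T}\bar{T'}=\tfrac12[\bar T,\bar{T'}]$ and $[\mathfrak h,\mathfrak h]\subset\mathfrak h$ the $\mathfrak h$-vertical directions are totally geodesic, so they contribute nothing to $H$; combined with the orthogonal splitting above this gives $H=\sum_{ij}\tilde g^{ij}\,(\nabla^{O(N)}_{X_i^{h'}}X_j^{h'})^{\perp}$, the projection being onto the normal space of $O(M,N)$.

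I would then identify that normal space and carry out the computation. Writing a tangent vector to $O(N)$ as $e^h+\bar C$, orthogonality to all $\bar A$ forces $C\in\mathfrak m$, while orthogonality to all $X^{h'}$ forces $g(e^\top,X)=-\skal{C}{S_X}$; thus the normal space is a tilted $(p+np)$-dimensional space with free normal horizontal part $(e^\bot)^h$ and free $\mathfrak m$-vertical part $\bar C$, the tangential horizontal component being determined by $C$ through $S$. Expanding $X^{h'}=X^h+\overline{S_X}$ and applying the Sasaki--Mok connection formulas for $\nabla^{O(N)}_{X^h}Y^h$, $\nabla^{O(N)}_{X^h}\bar T$ and $\nabla^{O(N)}_{\bar T}X^h$, which introduce $\nabla_XY$, the curvature $R$ of $N$ and the covariant derivatives of $S$ (hence of $\Pi$ and $A$), I would collect the horizontal and $\mathfrak m$-vertical parts of $(\nabla^{O(N)}_{X^{h'}}Y^{h'})^{\perp}$ and trace them against $\tilde g^{ij}$.

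On the Grassmann side, the projection $\pi_{Gr}:O(N)\to O(N)/(O(n)\oplus O(p))=Gr_p(N)$ is a Riemannian submersion for $\skal{\cdot}{\cdot}$ and $g_{Gr}$, its fibres are the orbits tangent to the $\mathfrak h$-vertical directions, and its restriction to $O(M,N)$ factors as $\gamma$ composed with the bundle projection to $M$; consequently $d\pi_{Gr}(\bar A)=0$ and $d\pi_{Gr}(X^{h'})=d\gamma(X)$. Computing $\tau(\gamma)=\operatorname{tr}_{\tilde g}\nabla d\gamma$ with the Levi--Civita connection of $\tilde g$ on the domain and the pulled-back connection on $Gr_p(N)$, and comparing with the expression for $H$, the goal is to show that $d\pi_{Gr}(H)$ is a nonzero multiple of $\tau(\gamma)$ while the residual $\mathfrak m$-vertical part of $H$ vanishes, whence $H=0\iff\tau(\gamma)=0$. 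The hard part is exactly this matching: because the normal space is the tilted space above rather than the naive $\{\xi^h\}$ plus $\mathfrak m$-vertical, the normal projection genuinely mixes the horizontal and vertical blocks through $S$, and one must verify that the curvature and $\nabla\Pi$ terms assemble precisely into the components of $\nabla d\gamma$ with the correct $\tilde g$-trace and numerical constants, the purely $S$-quadratic terms accounting for the Christoffel correction produced by $\tilde g\neq g$ on the domain of $\gamma$.
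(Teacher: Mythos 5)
Your setup coincides with the paper's: the same tangent/normal splitting of $TO(N)$ along $O(M,N)$ (with the tilted normal verticals $\bar T+(S_T)^h$, $T\in\mathfrak m$), the identification of the induced horizontal metric with $\tilde g$, the total geodesy of the $\mathfrak h$--vertical fibres, and the comparison of $H$ with $\tau(\gamma)$ through the submersion onto the Grassmann bundle. But the proposal stops exactly where the real work begins, and the way you phrase the final step hides a dimension mismatch that makes the plan, as stated, not close. The normal bundle $T^{\perp}O(M,N)$ has rank $n+np$, so $H=0$ amounts to two families of conditions: $g_{SM}(H,Z^h)=0$ for $Z\in T^{\perp}M$ and $g_{SM}(H,\bar T+(S_T)^h)=0$ for $T\in\mathfrak m(TN|M)$. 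By contrast $\tau(\gamma)$ takes values in a bundle of rank $(n+p)+np$: it has a normal--horizontal component, a \emph{tangential}--horizontal component $W=\sum_A\bigl(\nabla '_{\tilde e_A}\tilde e_A-\tilde\nabla_{\tilde e_A}\tilde e_A+R_{S_{\tilde e_A}}(\tilde e_A)^{\top}\bigr)$, and a vertical component $\Sigma=\sum_A\bigl(\nabla '_{\tilde e_A}S_{\tilde e_A}-S_{\tilde\nabla_{\tilde e_A}\tilde e_A}\bigr)$. Your claim that $d\pi_{Gr}(H)$ is a nonzero multiple of $\tau(\gamma)$ therefore presupposes that $\tau(\gamma)$ always lies in the $(n+np)$--dimensional image of the normal bundle, i.e.\ that the pointwise identity $W=S_{\Sigma}$ holds (a vector of the form $a^{h,Gr}+\hat C$ lies in that image iff $a^{\top}=S_C$). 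There is also no ``residual $\mathfrak m$--vertical part of $H$'' to be killed; the $\mathfrak m$--part of $H$ is one of the two essential components, and the genuine surplus sits on the Grassmannian side, in $W$.

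That identity $W=S_{\Sigma}$ is the crux of the theorem and does not fall out of tracing the Sasaki--Mok connection formulas. It requires two inputs your outline never invokes: the Codazzi--type equation $R(X,Y)_{\mathfrak m}=\nabla '_XS_Y-\nabla '_YS_X-S_{[X,Y]}$, which converts the curvature term $\sum_A g(R_{S_{\tilde e_A}}(\tilde e_A),Z)=\sum_A\skal{R(\tilde e_A,Z)}{S_{\tilde e_A}}$ into covariant derivatives of $S$; and the Koszul-type formula for $\tilde\nabla-\nabla '$ in terms of $\nabla 'P$, combined with $g((\nabla '_XP)Y,Z)=\skal{S_Z}{\nabla '_XS_Y-S_{\nabla '_XY}}+\skal{S_Y}{\nabla '_XS_Z-S_{\nabla '_XZ}}$, which expresses the Christoffel correction of $\tilde g$ through the same $\nabla 'S$ terms. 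Only after these cancellations does one get $g(W,Z)=g(S_{\Sigma},Z)$ for all $Z\in TM$, and then invertibility of $P$ (via $W-S_{S_W}=P(W)$) upgrades the two minimality conditions to the three harmonicity conditions. You correctly flag this matching as ``the hard part,'' but a proof must actually perform it; as written, the equivalence $H=0\Leftrightarrow\tau(\gamma)=0$ is asserted rather than proved.
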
 

For the intrinsic geometry, we compute curvature tensor and sectional curvatures. We state the correspondence between the geometric properties of $M$ and $O(M,N)$. It is interesting that the Levi--Civita connection on the horizontal lifts depends on the Levi--Civita connection $\tilde{\nabla}$ of $\tilde{g}$, namely
\begin{equation*}
\nabla^{O(M,N)}_{X^{h'}}Y^{h'}=\left(\tilde{\nabla}_XY\right)^{h'}-\overline{R '(X,Y)},
\end{equation*}
where $R'$ is the curvature tensor of $\nabla '$.

Similar results to the ones obtained in this paper can be generalized to the frame bundle adapted to a foliation or merely to a distribution. In that case the adapted frame bundle is a bundle over $N$ and the connection $\nabla '$ is a connection on $N$. These results are studied by the author independently \cite{nie2}.

Throughout the paper we use the following index convention:
\begin{equation*}
1\leq A,B,C\leq p,\quad p+1\leq \alpha,\beta,\gamma\leq p+n,\quad 1\leq i,j,k\leq p+n.
\end{equation*}

\begin{ackn}
The author wishes to thank Wojciech Koz\l owski and Pawe{\l} Walczak for fruitful conversations. The author was supported by the Polish NSC grant No. 6065/B/H03/2011/40
\end{ackn}

\section{Frame bundle, Sasaki--Mok metric, induced vector fields}

Let $L(N)$ be the bundle of frames over a Riemannian manifold $N$. Let $\omega$ be the connection form of the Levi--Civita connection $\nabla$. The horizontal distribution $\mathcal{H}$ is the kernel of $\omega$, whereas the vertical distribution $\mathcal{V}$ is the kernel of the differential of projection $\pi:L(N)\to N$. Thus 
\begin{equation*}
\mathcal{H}_u=\ker\omega_u,\quad \mathcal{V}_u=\ker\pi_{\ast u}\quad\textrm{and}\quad 
T_uL(N)=\mathcal{H}_u\oplus\mathcal{V}_u.
\end{equation*}   

Vertical distribution consists of fundamental vector fields $A^{\ast}_u$, where $A\in gl(n)$. Denote by $\mathcal{V}^i_u$ the subspace of $\mathcal{V}_u$ spanned by elements $(E^j_i)^{\ast}_u$, $j=1,\ldots,p+n$. Then
\begin{equation*}
T_uL(N)=\mathcal{H}_u\oplus\mathcal{V}^1_u\oplus\ldots\oplus\mathcal{V}^{p+n}_u.
\end{equation*}
Each summand is $(p+n)$--dimensional and any vector $X\in TN$ has the unique lift $X^{v,i}_u$ to the subspace $\mathcal{V}^i_u$. More precisely, for a vector $X$ and a basis $u\in L(N)$ we put
\begin{equation*}
X^{v,i}_u=\sum_j \xi^j (E^j_i)^{\ast}_u,
\end{equation*}
where $\xi_i$ are the coefficients of the vector $X$ with respect to the basis $u$.

Consider in $L(N)$ the Sasaki--Mok metric \cite{mok,ks0,cl} 
\begin{align*}
g_{SM}(X^h_u,Y^h_u) &=g(X,Y),\\
g_{SM}(X^h_u,Y^{v,j}) &=0, \\
g_{SM}(X^{v,i},Y^{v,j}) &=\delta_{ij}g(X,Y).
\end{align*}
This implies that the decomposition $T_uL(N)=\mathcal{H}_u\oplus\mathcal{V}^1_u\oplus\ldots\oplus\mathcal{V}^n_u$ is orthogonal. 

For a tensor field $T:TN\to TN$ put 
\begin{equation*}
\bar{T}_u=\sum_i T(u_i)^{v,i}_u,\quad u\in L(N).
\end{equation*}
Then $\bar{T}$ is a vertical vector field on $L(N)$.
\begin{prop}\label{prop:verticalinvariant}
The correspondence
\begin{equation*}
T\mapsto \bar{T}
\end{equation*}
is a bijection between $(1,1)$--tensor fields on $N$ and invariant vertical vector fields on $L(N)$, i.e. vertical vector fields $V$ such that
\begin{equation*}
R_{g\ast}V_u=V_{ug},\quad u\in L(N), g\in GL(n).
\end{equation*}
\end{prop}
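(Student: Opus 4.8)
The plan is to reduce the statement to the standard correspondence between invariant vertical vector fields and equivariant $gl(n)$-valued functions on $L(N)$, and then to identify such functions with $(1,1)$-tensor fields. The starting point is to rewrite $\bar T$ in terms of fundamental vector fields. Viewing a frame $u$ as a linear isomorphism $u:\mathbb{R}^{p+n}\to T_{\pi(u)}N$, let $\hat T(u)=u^{-1}\circ T_{\pi(u)}\circ u\in gl(n)$ be the matrix of $T$ in the frame $u$; if $T(u_i)=\sum_k T^k_i u_k$, then $\hat T(u)$ has entries $T^k_i$. Expanding the lift $T(u_i)^{v,i}_u=\sum_k T^k_i(E^k_i)^\ast_u$ and summing over $i$ gives
\begin{equation*}
\bar T_u=\sum_{i,k}T^k_i(E^k_i)^\ast_u=\bigl(\hat T(u)\bigr)^\ast_u.
\end{equation*}
Thus the whole problem is governed by the interplay of two transformation laws: how $\hat T(u)$ changes with the frame, and how fundamental vector fields transform under the right action.

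Next I would verify invariance. On one hand $\hat T(ug)=(ug)^{-1}T(ug)=g^{-1}\hat T(u)g=\mathrm{Ad}(g^{-1})\hat T(u)$. On the other hand, the defining property of fundamental vector fields yields $R_{g\ast}A^\ast_u=(\mathrm{Ad}(g^{-1})A)^\ast_{ug}$ for every $A\in gl(n)$. Combining the two,
\begin{equation*}
R_{g\ast}\bar T_u=R_{g\ast}(\hat T(u))^\ast_u=(\mathrm{Ad}(g^{-1})\hat T(u))^\ast_{ug}=(\hat T(ug))^\ast_{ug}=\bar T_{ug},
\end{equation*}
so $\bar T$ is an invariant vertical vector field and the correspondence is well defined.

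It remains to prove bijectivity, and the map is clearly linear in $T$. For injectivity, note that the right action of $GL(n)$ on $L(N)$ is free, so $A\mapsto A^\ast_u$ is an isomorphism $gl(n)\to\mathcal V_u$; hence $\bar T=0$ forces $\hat T(u)=0$ for every $u$, i.e. $T=0$ since $\pi$ is surjective. For surjectivity, take any invariant vertical $V$ and use the same isomorphism to write $V_u=(\phi(u))^\ast_u$ for a unique smooth $\phi:L(N)\to gl(n)$. The invariance $R_{g\ast}V_u=V_{ug}$ together with the fundamental-field law gives $\phi(ug)=\mathrm{Ad}(g^{-1})\phi(u)$. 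Defining $T_{\pi(u)}=u\circ\phi(u)\circ u^{-1}$ then produces a field independent of the chosen frame, since for $u'=ug$ one computes $u'\circ\phi(u')\circ (u')^{-1}=ug\circ g^{-1}\phi(u)g\circ g^{-1}u^{-1}=u\circ\phi(u)\circ u^{-1}$. Hence $T$ is a well-defined $(1,1)$-tensor field with $\hat T(u)=\phi(u)$, and $\bar T=V$.

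The computations are routine once the identity $\bar T_u=(\hat T(u))^\ast_u$ is in place; the only real care is needed in matching the two $\mathrm{Ad}(g^{-1})$ laws, that is, checking that the conjugation arising from the change of frame in $\hat T$ is exactly the one produced by $R_{g\ast}$ on fundamental vector fields, and in keeping the placement of $g$ versus $g^{-1}$ and of row versus column indices consistent. This is the single point where a sign or an inversion slip would break the argument; everything else is the abstract identification of $\mathrm{Ad}$-equivariant $gl(n)$-valued functions with sections of $\mathrm{End}(TN)$.
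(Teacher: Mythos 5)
Your proof is correct and follows essentially the same route as the paper: the identity $\bar T_u=(u^{-1}\circ T\circ u)^\ast_u$, the matching of the two $\mathrm{Ad}(g^{-1})$ transformation laws for invariance, and the converse construction $T=u\circ\phi(u)\circ u^{-1}$ are exactly the paper's argument (the paper writes the equivariance in index form $t_i^j(ug)=\sum_{k,l}g^k_i\,\tilde g^j_k\,t^l_k(u)$ and states the intrinsic version in the remark immediately after). Your version only spells out the well-definedness computation for the inverse map a bit more explicitly than the paper does.
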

\begin{proof}
Let $T$ be $(1,1)$--tensor field. For $u\in L(N)$ put $T(u_i)=\sum_j t_i^j(u)u_j$. Then
\begin{equation*}
\bar{T}_u=\sum_{i,j}t_i^j(u)(E^j_i)^{\ast}_u.
\end{equation*}
The invariance of $\bar{T}$ follows from the relation
\begin{equation*}
t_i^j(ug)=\sum_{k,l}g^k_i\,\tilde{g}^j_k\, t^l_k(u),
\end{equation*}
where $(\tilde{g}^i_j)$ is the matrix inverse to $(g^i_j)$, and the well known formula $R_{g\ast} A^{\ast}_u=({\rm ad}(g^{-1})A)^{\ast}_{ua}$ for any $A\in gl(n+p)$.

Conversely, if $V\in\mathcal{V}$ is invariant vertical vector field, then for fixed basis $u\in L(N)$ we have
\begin{equation*}
V_u=\sum_i (v_i)^{v,i}_u,
\end{equation*}
for some vectors $v_i$. It suffices to put $T(u_i)=v_i$. The fact that $T$ is well defined follows by the invariance of $V$. 
\end{proof}

\begin{rem}
Notice that the vector field $\bar{T}$ can be written in the form
\begin{equation*}
\bar{T}_u=(u^{-1}\cdot T\cdot u)^{\ast}_u,
\end{equation*}
where we treat the basis $u\in L(N)$ as a linear isomorphism $u:\mathbb{R}^{n+p}\to T_{\pi(u)}N$. Then the invariance of $T$ follows immediately by the correspondence $R_{g\ast}A^{\ast}_u=({\rm ad}(g^{-1})A)^{\ast}_{ug}$.
\end{rem}

\section{Adapted orthonormal frames}

Let $M$ be a $p$--dimensional submanifold of $(p+n)$--dimensional Riemannian manifold $(N,g)$ and denote by $O(N)$ the bundle of orthonormal frames over $N$. We say that a frame $u=(u_1,\ldots,u_{n+p})\in O(N)$ over the point $x\in M$ is {\it adapted} if the first $p$ vectors $u_1,\ldots,u_p$ are tangent to $M$. Then, clearly, the remaining vectors $u_{p+1},\ldots,u_{p+n}$ are orthogonal to $M$. Denote the subbundle of adapted frames by $O(M,N)$. We have the following diagram
\begin{equation*}
\begin{diagram}
\node{O(M,N)}\arrow{e,J}\arrow{se}\node{O(N)|M}\arrow{e,J}\arrow{s}\node{O(N)}\arrow{s}\\
\node[2]{M}\arrow{e,J}\node{N}
\end{diagram}
\end{equation*}
where the horizontal arrows are just inclusions. The structure group of $O(M)$ or $O(M)|N$ is $G=O(p+n)$, whereas the structure group of $O(M,N)$ is $H=O(p)\oplus O(n)\subset O(p+n)$.

Let $\nabla$ be the Levi--Civita connection on $N$ and let $\omega$ denote its connection form on $O(M)$. $\omega$ has its values in the Lie algebra $\mathfrak{g}=o(p+n)$. The $\mathfrak{h}=o(n)\oplus o(p)$--component $\omega '$ of $\omega$ restricted to $M$ defines the connection in $O(M,N)$. Moreover, the $o(n)$--component of $\omega '$ defines the Levi--Civita connection in $O(M)=O(M,N)/O(p)$ \cite{kn}.

Notice that tangent bundle $TN$ over $M$ is an associated bundle with $O(M,N)$, i.e.
\begin{equation*}
TN|M=O(M,N)\times_H\mathbb{R}^{p+n}.
\end{equation*}
In addition, we have
\begin{equation*}
TM=O(M,N)\times_{O(p)}\mathbb{R}^p,\quad T^{\bot}M=O(M,N)\times_{O(n)}\mathbb{R}^n,
\end{equation*}
where $\mathbb{R}^p,\mathbb{R}^n$ are considered as subspaces of $\mathbb{R}^{p+n}$ as restrictions to first $p$ and last $n$ coordinates, respectively. Denote by $\nabla '$ and $\nabla^M$ the connections on $TN|M$ and $TM$, respectively, induced by connection form $\omega '$. Then
\begin{align*}
\nabla '_XY &=(\nabla_X Y^{\top})^{\top}+(\nabla_X Y^{\bot})^{\bot},\\
\nabla^M_XZ &=\left(\nabla_X Z\right)^{\top}
\end{align*}
for $X\in TM$, $Y\in\Gamma(TN|M)$ and $Z\in\Gamma(TM)$. Notice that restrictions of $\nabla$ and $\nabla '$ to $TM$ define the same connection $\nabla^M$. Therefore, we will often write $\nabla '$ instead of $\nabla^M$. 

Denote by $S$ the $(1,2)$--tensor being the difference between $\nabla$ and $\nabla '$, i.e.,
\begin{equation*}
S_XY=(\nabla_XY^{\top})^{\bot}+(\nabla_XY^{\bot})^{\top},\quad X\in TM,\quad Y\in TN|M.
\end{equation*} 

If we denote by $\Pi$ the second fundamental form of $M$ and by $A_N$ the Weingarten operator of $M$ with respect to normal vector field $N$, then tensor $S$ can be written in the form
\begin{equation*}
S_XY=\Pi(X,Y)-A_{Y^{\bot}}(X),
\end{equation*}
which implies that $S$ vanishes if and only if $M$ is totally geodesic.

Let us now compute the horizontal distribution of $O(M,N)$ with respect to $\omega '$.

\begin{prop}\label{prop:lifttoomn}
The horizontal lift $X^{h'}$ of a vector $X\in TM$ to the bundle $O(M,N)$ equals
\begin{equation}\label{eq:horliftomn}
\begin{split}
X^{h'}_u &=X^h_u+(\overline{S_X})_u \\
&=X^h_u+\sum_A \Pi(X,u_A)^{v,A}_u+\sum_{\alpha} A_{u_{\alpha}}(X)^{v,\alpha}_u,
\end{split}
\end{equation}
where $X^h_u$ denotes the horizontal lift of $X$ to $O(N)|M$.
\end{prop}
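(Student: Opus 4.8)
The plan is to compute the horizontal lift $X^{h'}$ with respect to the reduced connection form $\omega'$ and compare it with the horizontal lift $X^h$ with respect to the full Levi--Civita connection form $\omega$. Both lifts project onto $X$ under $\pi_{\ast}$, so their difference is a vertical vector; the whole content of the statement is to identify this vertical vector as $\overline{S_X}$. First I would recall that a vector $\xi\in T_uO(N)$ is $\omega'$--horizontal precisely when $\omega'(\xi)=0$, where $\omega'=\omega_{\mathfrak{h}}$ is the $\mathfrak{h}=o(p)\oplus o(n)$--component of $\omega$. Since $X^h_u$ satisfies $\omega(X^h_u)=0$ it certainly satisfies $\omega'(X^h_u)=0$, but $X^h_u$ need not be tangent to $O(M,N)$; the correction term must therefore both cancel the $\mathfrak{m}$--component obstruction and land the result inside $T_uO(M,N)$.

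The cleaner route is to write $X^{h'}_u=X^h_u+V_u$ with $V_u$ vertical, and to determine $V_u$ by imposing the two defining conditions: (i) $X^{h'}_u\in T_uO(M,N)$, and (ii) $\omega'(X^{h'}_u)=0$. For a curve $u(t)$ in $O(M,N)$ with $u(0)=u$ projecting to a curve through $x$ with velocity $X$, the tangency condition forces the vertical part to record how the adapted splitting $TN|M=TM\oplus T^{\bot}M$ twists along $X$. Concretely, I would differentiate the frame vectors $u_i(t)$: for a tangent frame vector $u_A$ the component of $\nabla_X u_A$ normal to $M$ is exactly $\Pi(X,u_A)$, and for a normal frame vector $u_{\alpha}$ the tangential component of $\nabla_X u_{\alpha}$ is $-A_{u_{\alpha}}(X)$. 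These are precisely the off--diagonal $\mathfrak{m}$--blocks that $\omega$ sees but $\omega'$ discards, and they assemble, via the definition $\overline{T}_u=\sum_i T(u_i)^{v,i}_u$ and the formula $S_XY=\Pi(X,Y^{\top})-A_{Y^{\bot}}(X)$, into $\overline{S_X}_u=\sum_A\Pi(X,u_A)^{v,A}_u+\sum_{\alpha}A_{u_{\alpha}}(X)^{v,\alpha}_u$.

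To verify that this candidate works I would check the two conditions directly. For (ii), since $\omega$ applied to a vertical fundamental field $A^{\ast}_u$ returns $A$, and $\overline{S_X}_u$ is the fundamental field associated to the $\mathfrak{m}$--valued matrix with blocks given by $\Pi$ and $-A_{u_{\alpha}}$ read off in the frame $u$, its $\omega$--value lies entirely in $\mathfrak{m}$; hence $\omega'(\overline{S_X}_u)=0$, and combined with $\omega'(X^h_u)=0$ this gives $\omega'(X^{h'}_u)=0$. For (i), I would confirm that adding $\overline{S_X}_u$ to $X^h_u$ cancels the $\mathfrak{m}$--component of the infinitesimal motion of the frame, so that the resulting vector is tangent to the submanifold of adapted frames. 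Because $X^{h'}_u$ is then vertical-corrected to be $\omega'$--horizontal and tangent to $O(M,N)$, and there is a unique such lift projecting to $X$, uniqueness of the horizontal lift finishes the argument.

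The main obstacle is bookkeeping rather than conceptual: one must correctly match the abstract $\mathfrak{m}$--component of $\omega$ against the geometric second fundamental form and Weingarten data, being careful with the sign convention relating $A_V$ to $\Pi$ and with the index ranges $1\le A\le p$ for tangent and $p+1\le\alpha\le p+n$ for normal directions. Once the identification $\omega(X^h_u+\overline{S_X}_u)=(\text{the }\mathfrak{m}\text{-valued }S\text{-block})$ is pinned down, the two defining properties of the $\omega'$--horizontal lift follow immediately, so the crux is the clean translation of $\nabla-\nabla'=S$ into fundamental vertical vector fields.
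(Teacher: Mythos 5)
Your argument is correct, and it arrives at the same identity as the paper but by a genuinely different mechanism. The paper's proof is a direct computation of the vertical vector $V=X^h_u-X^{h'}_u$: it introduces the invariant $\mathbb{R}^{n+p}$--valued functions $f_i$ with $f_i(u)=e_i$, establishes the reconstruction formula $V_u=-\sum_i (u(Vf_i))^{v,i}_u$ for vertical vectors by checking it on fundamental fields $A^{\ast}_u$, and then observes that $u(Vf_i)=\nabla_Xu_i-\nabla'_Xu_i=S_X(u_i)$, so that $V=-\overline{S_X}$ in a single line; tangency to $O(M,N)$ and $\omega'$--horizontality are never invoked explicitly, since both lifts exist by general connection theory and only their difference is computed. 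You instead characterize $X^{h'}_u$ by its two defining properties and verify the candidate $X^h_u+\overline{S_X}_u$: tangency via differentiating a curve of adapted frames and reading off Gauss--Weingarten (the $\mathfrak{m}$--block of the infinitesimal rotation of an adapted frame moving with velocity $X$ is forced to equal $S_X$), and $\omega'$--horizontality via $\omega(\overline{S_X}_u)=u^{-1}S_Xu\in\mathfrak{m}$, finishing with uniqueness of the lift. Your route is more geometric and explains \emph{why} the correction is exactly the $\mathfrak{m}$--valued part of the frame's motion; the paper's route is shorter and, as its subsequent Remark points out, immediately yields the general fact $X^{h'}=X^h+\overline{S_X}$ for any two connections with difference tensor $S$, independent of the submanifold setting. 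One bookkeeping item to settle when you write it out: with the convention $S_XY=\Pi(X,Y^{\top})-A_{Y^{\bot}}(X)$ one has $S_X(u_{\alpha})=-A_{u_{\alpha}}(X)$, so the expansion $\overline{S_X}_u=\sum_A\Pi(X,u_A)^{v,A}_u+\sum_{\alpha}A_{u_{\alpha}}(X)^{v,\alpha}_u$ that you quote (and that the proposition displays) implicitly uses the opposite sign convention for the Weingarten operator; this is exactly the sign issue you flagged, and it should be resolved consistently rather than left open.
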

\begin{proof}
Fix a vector $X\in TM$ and a basis $u\in O(M,N)$. Let $f_i:O(N)_u\to\mathbb{R}^n$ denotes the invariant function such that $f_i(u)=e_i$, where $e_i$ is the $i$--th vector of the canonical basis in $\mathbb{R}^n$. We will show that for a vertical vector $V\in\mathcal{V}_u$ we have
\begin{equation*}
V_u=-\sum_i(u(Vf_i))^{v,i}_u.
\end{equation*}  
Indeed, since $A^{\ast}_u f_i=-A\cdot f_i(u)=-Ae_i$ is the $i$--th column of $-A$ we get
\begin{equation*}
-\sum_i (u(A^{\ast}_u f_i))^{v,i}_u=\sum_i (u(Ae_i))^{v,i}_u=A^{\ast}_u.
\end{equation*}
Consider the vertical vector $V=X^h_u-X^{h'}_u$. Then by above
\begin{align*}
(\overline{S_X})_u=\sum_i S_X(u_i)^{v,i}_u &=\sum_i\left(\nabla_X u_i-\nabla '_X u_i\right)^{v,i}_u=\sum_i (u(Vf_i))^{v,i}=-V,
\end{align*}
which proves \eqref{eq:horliftomn}.
\end{proof}

\begin{rem}
It follows by the proof of Proposition \ref{prop:lifttoomn} that if $\nabla$ and $\nabla '$ are two connections on $N$, $S=\nabla-\nabla '$ is their difference, then the relation between horizontal lifts to $L(M)$ equals $X^{h'}=X^h+\overline{S_X}$.
\end{rem}

\section{Properties of some operators and vector fields}

In this section, we will introduce necessary operators and vector fields which will play very important role. These operators allow to decompose vectors in the frame bundle $O(N)$ into tangent and orthogonal parts to $O(M,N)$.

Denote by $\mathfrak{g}(TN)$ the space of skew--symmetric $(1,1)$--tensor fields on $TN$, i.e. $T\in\mathfrak{g}(TN)$ if and only if
\begin{equation*}
g(T(X),Y)=-g(X,T(Y)),\quad X,Y\in TN.
\end{equation*}
The following Lemma will be used frequently.

\begin{lem}\label{lem:lifttensor}
Let $T\in\mathfrak{g}(TN)$. Then the vector field $\bar{T}|O(N)$ is tangent to $O(N)$. In particular, vector fields
\begin{equation*}
\overline{S_X}\quad\textrm{and}\quad\overline{R(X,Y)},\quad X,Y\in TM,
\end{equation*} 
are tangent to $O(N)$.
\end{lem}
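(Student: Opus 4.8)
The plan is to reduce the tangency of $\bar{T}$ to a purely algebraic condition at each point, and then to check that skew-symmetry of the tensor $T$ translates into skew-symmetry of the matrix representing $\bar{T}_u$. I would first recall the standard fact from reduction theory: for $u\in O(N)$ the vertical part of $T_uO(N)$ consists precisely of the fundamental vectors $A^{\ast}_u$ with $A\in o(n+p)$, the Lie algebra of the structure group $O(n+p)$, that is, with $A$ skew--symmetric. Since $\bar{T}$ is vertical on $L(N)$ by construction and the map $A\mapsto A^{\ast}_u$ is injective on $gl(n+p)$, the vector $\bar{T}_u$ is tangent to $O(N)$ if and only if the matrix representing it lies in $o(n+p)$.

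Next I would invoke the formula from the remark following Proposition \ref{prop:verticalinvariant}, namely $\bar{T}_u=(u^{-1}\cdot T\cdot u)^{\ast}_u$, where $u:\mathbb{R}^{n+p}\to T_{\pi(u)}N$ is viewed as a linear isomorphism. For $u\in O(N)$ this isomorphism is an isometry, so $g(ua,ub)=\skal{a}{b}$ for the Euclidean product on $\mathbb{R}^{n+p}$, and therefore $\skal{u^{-1}v}{a}=g(v,ua)$. Putting $A=u^{-1}Tu$ and computing $\skal{Aa}{b}=g(Tua,ub)$, the skew--symmetry of $T$ yields $g(Tua,ub)=-g(ua,Tub)=-\skal{a}{Ab}$. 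Hence $A\in o(n+p)$, and by the criterion above $\bar{T}|O(N)$ is tangent to $O(N)$.

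It then remains to verify that $S_X$ and $R(X,Y)$ belong to $\mathfrak{g}(TN)$. For $S_X$ I would use the expression $S_XY=\Pi(X,Y^{\top})-A_{Y^{\bot}}(X)$ together with the defining relation $g(\Pi(X,W),V)=g(A_V(X),W)$ between the second fundamental form and the Weingarten operator; decomposing $Y$ and $Z$ into tangent and normal parts gives $g(S_XY,Z)=g(A_{Z^{\bot}}(X),Y^{\top})-g(\Pi(X,Z^{\top}),Y^{\bot})=-g(Y,S_XZ)$, so $S_X$ is skew--symmetric. For $R(X,Y)$ the identity $g(R(X,Y)Z,W)=-g(R(X,Y)W,Z)$ is the standard curvature symmetry valid for any metric connection, in particular for the Levi--Civita connection $\nabla$ on $N$; thus $R(X,Y)\in\mathfrak{g}(TN)$ and the first part applies (the conjugation argument requires only that the frame be orthonormal, so it is unaffected by these tensors being defined over $M$).

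The only mildly delicate point is the conjugation step: one must use that an orthonormal frame $u$ is an isometry, so that conjugation by $u$ carries $g$--skew--symmetric endomorphisms to $\skal{\cdot}{\cdot}$--skew--symmetric matrices. The verification that $S_X$ lies in $\mathfrak{g}(TN)$ is the only genuinely geometric computation and rests entirely on the Weingarten relation, while the curvature case is immediate.
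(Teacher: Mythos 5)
Your proof is correct and follows essentially the same route as the paper: both reduce tangency to the observation that, since $u$ is orthonormal, the matrix of coefficients of $\bar{T}_u$ (equivalently $u^{-1}Tu$) is skew--symmetric exactly when $T\in\mathfrak{g}(TN)$, so that $\bar{T}_u$ is a fundamental vector of an element of $o(n+p)$. The paper does this by an explicit index computation with $t^j_i=-t^i_j$ and leaves the skew--symmetry of $S_X$ and $R(X,Y)$ implicit, whereas you phrase it via the conjugation formula of the remark and verify those two facts explicitly; the content is the same.
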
 
\begin{proof}
Let $T(u_i)=\sum_j t^j_i u_j$. Assuming $T$ is skew--symmetric we have $t^j_i=-t^i_j$, hence
\begin{equation*}
\bar{T}_u=\sum_{i,j}t^j_i(E^j_i)^{\ast}_u=\sum_{i<j}t^j_i(E^j_i-E^i_j)^{\ast}_u.
\end{equation*}
Therefore $\bar{T}_u\in T_uO(N)$.
\end{proof}

Denote by $\mathfrak{m}$ the natural complement of the Lie algebra $\mathfrak{h}$ in the Lie algebra of skew--symmetric matrices $\mathfrak{g}$. This decomposition allows to decompose skew--symmetric tensors over $M$ as follows. For $T\in\mathfrak{g}(TN|M)$ put
\begin{align*}
T_{\mathfrak{m}}(X) &=T(X^{\bot})^{\top}+T(X^{\top})^{\bot}, \\
T_{\mathfrak{h}}(X) &=T(X^{\bot})^{\bot}+T(X^{\top})^{\top}.
\end{align*}
Clearly $T=T_{\mathfrak{h}}+T_{\mathfrak{m}}$ and $T_{\mathfrak{h}}$, $T_{\mathfrak{m}}$ are skew--symmetric. Denote the subspaces spanned by these tensors by $\mathfrak{h}(TN|M)$ and $\mathfrak{m}(TN|M)$, respectively.

\begin{lem}\label{lem:tensordecomp}
The decomposition
\begin{equation}\label{eq:tensordecomp}
\mathfrak{g}(TN|M)=\mathfrak{h}(TN|M)\oplus\mathfrak{m}(TN|M)
\end{equation} 
implies the following geometric conditions.
\begin{enumerate}
\item For $T\in\mathfrak{g}(TN|M)$ we have
\begin{align*}
(\nabla_X T_{\mathfrak{h}})_{\mathfrak{m}} &=[S_X,T_{\mathfrak{h}}], 
&&(\nabla_X T_{\mathfrak{h}})_{\mathfrak{h}}=\nabla '_XT_{\mathfrak{h}},\\
(\nabla_X T_{\mathfrak{m}})_{\mathfrak{m}} &=\nabla '_XT_{\mathfrak{m}},
&&(\nabla_X T_{\mathfrak{m}})_{\mathfrak{h}}=[S_X,T_{\mathfrak{m}}].
\end{align*}
In particular, $\nabla '$ obeys the decomposition \eqref{eq:tensordecomp}.
\item The following Codazzi and Gauss formulas hold
\begin{align*}
R(X,Y)_{\mathfrak{h}} &=R'(X,Y)+[S_X,S_Y],\\
R(X,Y)_{\mathfrak{m}} &=\nabla '_X S_Y-\nabla '_Y S_X-S_{[X,Y]}.
\end{align*}
\end{enumerate}
\end{lem}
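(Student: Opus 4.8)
The plan is to reduce both parts to a single algebraic identity relating $\nabla$ and $\nabla'$ on $(1,1)$--tensors, combined with the symmetric--pair structure of the splitting $\mathfrak{g}=\mathfrak{h}\oplus\mathfrak{m}$. First I would establish the pointwise identity $\nabla_X T-\nabla'_X T=[S_X,T]$ for every $(1,1)$--tensor field $T$ on $TN|M$. This is immediate from the induced connection on endomorphisms: applying both sides to a vector $Z$ and using $\nabla=\nabla'+S$ gives $\nabla_X(TZ)-T(\nabla_X Z)-\nabla'_X(TZ)+T(\nabla'_X Z)=S_X(TZ)-T(S_X Z)=[S_X,T](Z)$.

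Next come two structural facts. Since $\nabla'_XY^{\top}=(\nabla_XY^{\top})^{\top}$ and $\nabla'_XY^{\bot}=(\nabla_XY^{\bot})^{\bot}$, the connection $\nabla'$ preserves both $TM$ and $T^{\bot}M$; hence for $T\in\mathfrak{h}(TN|M)$ (resp.\ $T\in\mathfrak{m}(TN|M)$) the Leibniz rule forces $\nabla'_X T$ again to preserve (resp.\ swap) the splitting, so $\nabla'$ respects the decomposition $\mathfrak{g}(TN|M)=\mathfrak{h}(TN|M)\oplus\mathfrak{m}(TN|M)$. Secondly, because $S_X\in\mathfrak{m}(TN|M)$ (it exchanges tangent and normal parts, as is visible from $S_XY=\Pi(X,Y)-A_{Y^{\bot}}(X)$), the symmetric--pair relations $[\mathfrak{m},\mathfrak{h}]\subseteq\mathfrak{m}$ and $[\mathfrak{m},\mathfrak{m}]\subseteq\mathfrak{h}$ give $[S_X,T_{\mathfrak{h}}]\in\mathfrak{m}(TN|M)$ and $[S_X,T_{\mathfrak{m}}]\in\mathfrak{h}(TN|M)$, which I would verify by chasing a tangent vector through the brackets. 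Feeding these into the first identity and reading off the $\mathfrak{h}$-- and $\mathfrak{m}$--components of $\nabla_X T_{\mathfrak{h}}=\nabla'_X T_{\mathfrak{h}}+[S_X,T_{\mathfrak{h}}]$ and $\nabla_X T_{\mathfrak{m}}=\nabla'_X T_{\mathfrak{m}}+[S_X,T_{\mathfrak{m}}]$ yields all four formulas of part (1); in particular $\nabla'$ obeys the decomposition.

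For part (2), I would insert $\nabla=\nabla'+S$ into $R(X,Y)=\nabla_X\nabla_Y-\nabla_Y\nabla_X-\nabla_{[X,Y]}$ and expand. The terms quadratic in $S$ collect to $[S_X,S_Y]$, the terms mixing $\nabla'$ with one factor of $S$ reorganize into $\nabla'_X S_Y-\nabla'_Y S_X$ (reading $S_Y$ as the endomorphism $Z\mapsto S_Y Z$), and the bracket term contributes $-S_{[X,Y]}$, so that $R(X,Y)=R'(X,Y)+[S_X,S_Y]+(\nabla'_X S_Y-\nabla'_Y S_X-S_{[X,Y]})$. It then remains to assign components: $R'(X,Y)\in\mathfrak{h}(TN|M)$ since $\nabla'$ preserves the splitting; $[S_X,S_Y]\in\mathfrak{h}(TN|M)$ by $[\mathfrak{m},\mathfrak{m}]\subseteq\mathfrak{h}$; and each of $\nabla'_X S_Y$, $\nabla'_Y S_X$, $S_{[X,Y]}$ lies in $\mathfrak{m}(TN|M)$ by part (1) applied to $S_Y,S_X\in\mathfrak{m}$. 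Projecting onto $\mathfrak{h}$ and $\mathfrak{m}$ then gives the Gauss and Codazzi formulas respectively.

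The main obstacle is the bookkeeping in the curvature expansion: one must fix the meaning of $\nabla'_X S_Y$ (the covariant derivative of the endomorphism $S_Y$ with $Y$ held as a field) and check that the mixed terms $S_X(\nabla'_Y Z)-S_Y(\nabla'_X Z)$ cancel against the Leibniz remainders of $\nabla'_X(S_Y Z)-\nabla'_Y(S_X Z)$, leaving exactly $(\nabla'_X S_Y-\nabla'_Y S_X)(Z)$. One should also confirm that this combination is tensorial in $X,Y$, i.e.\ that the $(Xf)S_Y$ term arising in $\nabla'_X S_{fY}$ is cancelled by the corresponding term in $S_{[X,fY]}$. Everything else follows directly from the first identity and the relations $[\mathfrak{m},\mathfrak{h}]\subseteq\mathfrak{m}$, $[\mathfrak{m},\mathfrak{m}]\subseteq\mathfrak{h}$.
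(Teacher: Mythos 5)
Your proof is correct. It differs from the paper's in organization: the paper verifies a single representative identity, namely $(\nabla_X T_{\mathfrak{h}})_{\mathfrak{m}}=[S_X,T_{\mathfrak{h}}]$, by expanding the covariant derivative componentwise through the tangent/normal projections, and leaves the remaining five formulas as ``simple computations.'' You instead prove the master identity $\nabla_XT-\nabla'_XT=[S_X,T]$ once for all $(1,1)$--tensors and then let the symmetric--pair algebra ($S_X\in\mathfrak{m}$, $\nabla'$ preserving the splitting, $[\mathfrak{m},\mathfrak{h}]\subseteq\mathfrak{m}$, $[\mathfrak{m},\mathfrak{m}]\subseteq\mathfrak{h}$) distribute the terms into their components. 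The underlying computation is the same --- both rest on $\nabla-\nabla'=S$ and the behaviour of the projections --- but your packaging handles all four formulas of part (1) uniformly and, unlike the paper, actually carries out the curvature expansion for part (2), including the cancellation of the mixed terms $S_X\nabla'_YZ-S_Y\nabla'_XZ$ and the tensoriality check on $\nabla'_XS_Y-\nabla'_YS_X-S_{[X,Y]}$. The paper's route is more concrete and self-contained at the level of a single formula; yours makes transparent why the Gauss and Codazzi equations land in $\mathfrak{h}$ and $\mathfrak{m}$ respectively, which is the structural point the lemma is after. No gaps.
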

\begin{proof}
Follows by simple computations. We will just compute one formula. We have
\begin{align*}
(\nabla_X T_{\mathfrak{h}})_{\mathfrak{m}}(Z) &=(\nabla_X T_{\mathfrak{h}})(Z^{\top})^{\bot}+(\nabla_X T_{\mathfrak{h}})(Z^{\bot})^{\top}\\
&=(\nabla_X T_{\mathfrak{h}}(Z^{\top})-T_{\mathfrak{h}}(\nabla_X Z^{\top}))^{\bot}+(\nabla_X T_{\mathfrak{h}}(Z^{\bot})-T_{\mathfrak{h}}(\nabla_X Z^{\bot}))^{\top}\\
&=(\nabla_X (T(Z^{\top}))^{\top})^{\bot}-(T((\nabla_X Z^{\top})^{\bot}))^{\bot}\\
&+(\nabla_X (T(Z^{\bot}))^{\bot})^{\top}-((T(\nabla_X Z^{\bot})^{\top}))^{\top}\\
&=S_X((T(Z^{\top}))^{\top})-(T(S_X(Z^{\top})))^{\bot}+S_X((T(Z^{\bot}))^{\bot})-(T(S_X(Z^{\bot})))^{\top}\\
&=S_X(T_{\mathfrak{h}}(Z))-T_{\mathfrak{h}}(S_X(Z))\\
&=[S_X,T_{\mathfrak{h}}](Z).
\end{align*}
\end{proof}

Now, for $T\in\mathfrak{g}(TN|M)$ define
\begin{align*} 
R_T(X) &=\sum_i R(e_i,T(e_i))X,\\
S_{T_{\mathfrak{m}}} &=\sum_A S_{e_A}(T_{\mathfrak{m}}(e_A))-\sum_{\alpha}S_{T_{\mathfrak{m}}(e_{\alpha})}(e_{\alpha}),
\end{align*}
where $X\in TN|M$. Notice that, by the symmetry of the second fundamental form $\Pi$ of $M$,
\begin{equation*}
S_{T_{\mathfrak{m}}}=2\sum_A S_{e_A}(T_{\mathfrak{m}}(e_A)),
\end{equation*}
which implies, in particular, that
\begin{equation*}
S_{S_X}=2\sum_A S_{e_A}^2(X),\quad X\in TM.
\end{equation*}

Let us collect the properties of vector $S_{T_{\mathfrak{m}}}$ and operators $S_X$, $R_T$. Denote by $\skal{T}{T'}$ the inner product of skew--symmetric endomorphism equal
\begin{equation*}
\skal{T}{T'}=\sum_i g(T(e_i),T'(e_i))=-{\rm tr}(T\cdot T'),
\end{equation*}
where $(e_i)$ is any orthonormal basis. Notice, that $T_{\mathfrak{m}}$ and $T'_{\mathfrak{h}}$ are orthogonal and for any skew--symmetric endomorphisms $T,T',T''$
\begin{equation*}
\skal{[T,T']}{T''}=\skal{T}{[T',T'']},
\end{equation*}
where $[T,T']$ denotes the commutator of $T$ and $T'$.

\begin{lem}\label{lem:propoperators}
Let $X,Y\in TM$ and $T\in\mathfrak{g}(TN|M)$. Then, the following relations hold
\begin{align*}
& g(R_T(X),Y)=\skal{R(X,Y)}{T},\\
& g(S_{T_\mathfrak{m}},X)=-\skal{T_{\mathfrak{m}}}{S_X}.
\end{align*}
\end{lem}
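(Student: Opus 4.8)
The plan is to establish both identities by unwinding the definitions of $R_T$ and $S_{T_{\mathfrak{m}}}$ and then applying the cyclic/skew-symmetry properties of the objects involved, together with the inner product $\skal{\cdot}{\cdot}$ introduced just above the statement. For the first identity, I would start from the definition $R_T(X)=\sum_i R(e_i,T(e_i))X$ and compute $g(R_T(X),Y)=\sum_i g(R(e_i,T(e_i))X,Y)$. Here I would use the standard pair symmetry of the Riemannian curvature tensor, $g(R(Z,W)X,Y)=g(R(X,Y)Z,W)$, to rewrite each summand as $g(R(X,Y)e_i,T(e_i))$. Summing over $i$ and recognizing the result as $\sum_i g(R(X,Y)e_i,T(e_i))$, which is exactly $\skal{R(X,Y)}{T}$ by the definition of the inner product of skew-symmetric endomorphisms (note $R(X,Y)$ is skew-symmetric as an endomorphism since $\nabla$ is metric), yields the claim. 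The only subtlety is confirming that the summation index $i$ in the definition of $R_T$ runs over a full orthonormal basis of $T_xN$, so that the inner product $\skal{R(X,Y)}{T}$ is basis-independent.

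For the second identity, I would begin with the simplified form $S_{T_{\mathfrak{m}}}=2\sum_A S_{e_A}(T_{\mathfrak{m}}(e_A))$ recorded in the excerpt, where $A$ ranges over an orthonormal basis of $T_xM$. Pairing with $X\in TM$ gives $g(S_{T_{\mathfrak{m}}},X)=2\sum_A g(S_{e_A}(T_{\mathfrak{m}}(e_A)),X)$. The key structural input is that $S_{e_A}=\Pi(e_A,\cdot)-A_{(\cdot)^{\bot}}(e_A)$ interchanges tangent and normal components, so $S_{e_A}(T_{\mathfrak{m}}(e_A))$ lands in $TM$ precisely when $T_{\mathfrak{m}}(e_A)$ is normal; here I would exploit the adjoint relationship between the second fundamental form and the Weingarten operator, namely $g(\Pi(X,Y),V)=g(A_V(X),Y)$, to move $S_{e_A}$ across the inner product as a (signed) adjoint. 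The aim is to convert $\sum_A g(S_{e_A}(T_{\mathfrak{m}}(e_A)),X)$ into an expression of the form $-\sum_A g(T_{\mathfrak{m}}(e_A),S_X(e_A))$, which upon summation is $-\skal{T_{\mathfrak{m}}}{S_X}$, after carefully checking that the factor of $2$ is absorbed by the symmetric/antisymmetric split that produced it in the first place.

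I expect the main obstacle to lie in the second identity, specifically in correctly tracking the interplay between the tangential and normal projections and the sign conventions when transferring $S_{e_A}$ to the other slot of the inner product. Because $S_X$ is built from $\Pi$ (symmetric in its arguments) and $A_V$ (the adjoint of $\Pi$), the adjoint of $S_X$ is not simply $\pm S_X$ but involves a reflection between the $\mathfrak{m}$-components; one must verify that $\skal{T_{\mathfrak{m}}}{S_X}=\sum_i g(T_{\mathfrak{m}}(e_i),S_X(e_i))$ computed over the full basis of $T_xN$ reduces correctly to the tangent-indexed sum, using that $S_X$ also exchanges $TM$ and $T^{\bot}M$ and that $T_{\mathfrak{m}}$ does the same. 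The cleanest route is to expand $\skal{T_{\mathfrak{m}}}{S_X}$ over the full orthonormal basis $(e_i)=(e_A)\cup(e_{\alpha})$ and match it term-by-term against the two-part definition of $S_{T_{\mathfrak{m}}}$ given before its simplification, thereby justifying the sign and the factor simultaneously. Once the bookkeeping is set up, the remainder is routine.
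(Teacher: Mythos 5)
Your proposal matches the paper's proof: the first identity is obtained exactly as you describe, via the pair symmetry $g(R(Z,W)X,Y)=g(R(X,Y)Z,W)$, and for the second identity the paper follows precisely what you call ``the cleanest route,'' starting from the two-part definition of $S_{T_{\mathfrak{m}}}$, using the skew-symmetry of $S_V$ (coming from the adjoint relation between $\Pi$ and $A$) together with the symmetry of $\Pi$ to write $S_{e_A}(X)=S_X(e_A)$, and matching the tangent- and normal-indexed sums against $\skal{T_{\mathfrak{m}}}{S_X}$ over the full basis. The bookkeeping you flag as the main obstacle does go through as you anticipate, so the argument is correct and essentially identical to the paper's.
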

\begin{proof}
For the proof of the first relation, we have
\begin{align*}
g(R_T(X),Y) &=\sum_i g(R(u_i,T(u_i))X,Y)=\sum_i g(R(X,Y)u_i,T(u_i))\\
&=\skal{R(X,Y)}{T}.
\end{align*}
Further
\begin{align*}
g(S_{T_{\mathfrak{m}}},X) &=\sum_A g(S_{e_A}(T_{\mathfrak{m}}(e_A)),X)-\sum_{\alpha}g(S_{T_{\mathfrak{m}}(e_{\alpha})}(e_{\alpha}),X)\\
&=-\sum_A g(T_{\mathfrak{m}}(e_A),S_{e_A}(X))+\sum_{\alpha}g(e_{\alpha},S_{T_{\mathfrak{m}}(e_{\alpha})}(X))\\
&=-\sum_A g(T_{\mathfrak{m}}(e_A),S_X(e_A))-\sum_{\alpha}g(S_X(e_{\alpha}),T_{\mathfrak{m}}(e_{\alpha}))\\
&=-\skal{T_{\mathfrak{m}}}{S_X}.
\end{align*}
\end{proof}

For any $X\in TM$ put
\begin{align*}
P(X)=X-2\sum_A S_{e_A}^2(X)=X-S_{S_X},
\end{align*} 
where $(e_A)$ is any orthonormal basis on $M$.

\begin{lem}\label{lem:tildeg}
Operator $P$ satisfies the following relation
\begin{equation*}
g_{SM}(X^{h'},Y^{h'})=g(X,P(Y)),\quad X,Y\in TM.
\end{equation*}
In particular, $P:TM\to TM$ is symmetric and invertible $(1,1)$--tensor field. Moreover,
\begin{equation*}
g((\nabla '_X P)Y,Z)=\skal{S_Z}{\nabla '_XS_Y-S_{\nabla '_XY}}+\skal{S_Y}{\nabla '_XS_Z-S_{\nabla '_XZ}}
\end{equation*}
\end{lem}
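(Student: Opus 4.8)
The plan is to handle the three assertions in turn, using Proposition~\ref{prop:lifttoomn} and Lemma~\ref{lem:propoperators} as the principal inputs, and to reserve the real bookkeeping for the derivative formula at the end.

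First I would establish the metric identity. By Proposition~\ref{prop:lifttoomn} we have $X^{h'}_u=X^h_u+(\overline{S_X})_u$, where $X^h_u$ is horizontal and $\overline{S_X}$ is vertical. Since the Sasaki--Mok metric makes $\mathcal{H}$ orthogonal to $\mathcal{V}$ and satisfies $g_{SM}(X^h,Y^h)=g(X,Y)$, the cross terms drop out and $g_{SM}(X^{h'},Y^{h'})=g(X,Y)+g_{SM}(\overline{S_X},\overline{S_Y})$. Writing $\overline{S_X}=\sum_i(S_X(u_i))^{v,i}$ and $\overline{S_Y}=\sum_j(S_Y(u_j))^{v,j}$ and using that distinct vertical blocks are $g_{SM}$--orthogonal with $g_{SM}(V^{v,i},W^{v,i})=g(V,W)$, the vertical term collapses to $\sum_i g(S_X(u_i),S_Y(u_i))=\skal{S_X}{S_Y}$, since $(u_i)$ is an orthonormal basis. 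Finally, noting that $S_Y$ lies in $\mathfrak{m}(TN|M)$ (it interchanges $\top$ and $\bot$) and applying the second identity of Lemma~\ref{lem:propoperators} with $T_{\mathfrak{m}}=S_Y$ gives $\skal{S_X}{S_Y}=-g(S_{S_Y},X)$, so that $g(X,Y)+\skal{S_X}{S_Y}=g(X,Y-S_{S_Y})=g(X,P(Y))$, which is the claimed formula.

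Symmetry and invertibility then come cheaply. Symmetry of $P$ is immediate from the identity just proved, since $g_{SM}$ is symmetric and hence $g(X,P(Y))=g(Y,P(X))$; alternatively each $S_{e_A}^2$ is self--adjoint because $S_{e_A}$ is skew--symmetric. That $P$ maps $TM$ into $TM$ follows from $S_{e_A}^2(X)=-A_{\Pi(e_A,X)}(e_A)\in TM$ for $X\in TM$. For invertibility I would specialize the identity to $X=Y$, giving $g(X,P(X))=|X|^2+\skal{S_X}{S_X}=|X|^2+\sum_i|S_X(u_i)|^2\geq|X|^2$, so that $P$ is positive definite and therefore an isomorphism. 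For the covariant derivative, I would work from the bilinear form $F(Y,Z)=g(P(Y),Z)=g(Y,Z)+\skal{S_Y}{S_Z}$. Expanding $(\nabla'_XP)Y=\nabla'_X(PY)-P(\nabla'_XY)$ and using that $\nabla'$ restricted to $TM$ is metric yields $g((\nabla'_XP)Y,Z)=XF(Y,Z)-F(\nabla'_XY,Z)-F(Y,\nabla'_XZ)$. Differentiating $g(Y,Z)$ cancels the two purely metric terms, and the remaining contributions come from differentiating $\skal{S_Y}{S_Z}$. The decisive step is the compatibility of $\nabla'$ with the endomorphism inner product, $X\skal{T}{T'}=\skal{\nabla'_XT}{T'}+\skal{T}{\nabla'_XT'}$, which holds because $\skal{T}{T'}=-\mathrm{tr}(T\cdot T')$ and a metric connection commutes with the trace. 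Applying it with $T=S_Y$ and $T'=S_Z$ (the endomorphism fields differentiated for fixed vector fields $Y,Z$) and absorbing the correction terms $S_{\nabla'_XY}$ and $S_{\nabla'_XZ}$ arising from $F(\nabla'_XY,Z)$ and $F(Y,\nabla'_XZ)$, one is left precisely with $\skal{S_Z}{\nabla'_XS_Y-S_{\nabla'_XY}}+\skal{S_Y}{\nabla'_XS_Z-S_{\nabla'_XZ}}$.

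The main obstacle I anticipate is not any single calculation but the bookkeeping in the last part: correctly reading $\nabla'_XS_Y$ as the covariant derivative of the endomorphism field $Z\mapsto S_Y(Z)$ with $Y$ held fixed, distinguishing it from the tensorial term $S_{\nabla'_XY}$, and confirming that the induced connection on skew--symmetric endomorphisms is metric so that the Leibniz rule for $\skal{\cdot}{\cdot}$ is legitimate. Once these conventions are pinned down the regrouping is forced and the stated identity follows.
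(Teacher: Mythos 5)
Your proposal is correct and follows essentially the same route as the paper: the metric identity via the orthogonal decomposition $X^{h'}=X^h+\overline{S_X}$ combined with $g(S_{S_Y},X)=-\skal{S_Y}{S_X}$ from Lemma \ref{lem:propoperators}, invertibility from positive definiteness of $g(X,P(X))=|X^{h'}|^2_{g_{SM}}$, and the derivative formula from metric compatibility of $\nabla'$ together with the Leibniz rule for $\skal{\cdot}{\cdot}$ on endomorphism fields. Your reformulation through the bilinear form $F(Y,Z)=g(P(Y),Z)$ is only a cosmetic reorganization of the paper's direct expansion of $\nabla'_X(Y-S_{S_Y})$.
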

\begin{proof}
Notice that for any $X,Y\in TM$ and $u\in O(M,N)$ by Lemma \ref{lem:propoperators}
\begin{align*}
g(X,P(Y)) &=g(X,Y-S_{S_Y})\\
&=g(X,Y)+\skal{S_X}{S_Y}\\
&=g(X,Y)+\sum_i g(S_X(u_i),S_Y(u_i))\\
&=g(X,Y)+g_{SM}(\overline{S_X},\overline{S_Y})\\
&=g_{SM}(X^{h'},Y^{h'}).
\end{align*}
Since $|X^{h'}|_{G_{SM}}^2=g(X,P(X))$, it follows that $P$ is invertible. Moreover,
\begin{align*}
g((\nabla '_X P)Y,Z) &=g(\nabla '_X(Y-S_{S_Y}),Z)-g(\nabla '_X Y,Z-S_{S_Z})\\
&=-g(\nabla '_X S_{S_Y},Z)+g(\nabla '_XY,S_{S_Z})\\
&=X\skal{S_Y}{S_Z}+g(S_{S_Y},\nabla '_XZ)-\skal{S_{\nabla '_XY}}{S_Z}\\
&=\skal{\nabla '_XS_Y}{S_Z}+\skal{S_Y}{\nabla '_XS_Z}-\skal{S_Y}{S_{\nabla '_XZ}}-\skal{S_{\nabla '_XY}}{S_Z},
\end{align*}
which proves the second part of the lemma.
\end{proof}

By above Lemma, the symmetric and bilinear form 
\begin{equation*}
\tilde{g}(X,Y)=g(X,P(Y)),\quad X,Y\in TM, 
\end{equation*}
defines the Riemannian metric on $M$. Let $\tilde{\nabla}$ be the Levi--Civita connection of $\tilde{g}$ on $M$. One can show, see for example \cite{gm}, that
\begin{multline}\label{eq:GilMedranoeq}
g(\tilde{\nabla}_X Y-\nabla '_XY,P(Z))\\
=\frac{1}{2}\left(g((\nabla '_XP)Y,Z)+g((\nabla '_Y P)X,Z)-g(X,(\nabla '_ZP)X)\right) 
\end{multline}
for any $X,Y,Z\in TM$. Denote the difference of connections $\tilde{\nabla}$ and $\nabla '$ by $L$, i.e.
\begin{equation*}
L_XY=\tilde{\nabla}_XY-\nabla '_XY,\quad X,Y\in TM.
\end{equation*}
We will describe tensor $L$ in a different way. Put
\begin{equation*}
Q_T(X)=P^{-1}((R_T(X))^{\top}-S_{(\nabla_XT)_{\mathfrak{m}}}),\quad X\in TM.
\end{equation*}

\begin{lem}\label{lem:propofQT}
The operator $Q_T$ has the following properties
\begin{align*}
g_{SM}(Q_{T_{\mathfrak{h}}}(X)^{h'},Y^{h'}) &=\skal{R'(X,Y)}{T_{\mathfrak{h}}},\\
g_{SM}(Q_{T_{\mathfrak{m}}}(X)^{h'},Y^{h'}) &=\skal{\nabla '_XS_Y-\nabla '_YS_X-S_{[X,Y]}}{T_{\mathfrak{m}}}+\skal{\nabla '_X T_{\mathfrak{m}}}{S_Y}.
\end{align*}
\end{lem}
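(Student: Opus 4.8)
The plan is to reduce both identities to a single unified computation and only then specialize. First I would invoke Lemma~\ref{lem:tildeg}, which gives $g_{SM}(X^{h'},Y^{h'})=g(X,P(Y))$, together with the symmetry of $P$, to rewrite
\begin{equation*}
g_{SM}(Q_T(X)^{h'},Y^{h'})=g(Q_T(X),P(Y))=g(P(Q_T(X)),Y).
\end{equation*}
By the definition of $Q_T$ and the invertibility of $P$ we have $P(Q_T(X))=(R_T(X))^{\top}-S_{(\nabla_XT)_{\mathfrak{m}}}$, and since $Y\in TM$ the tangential projection is harmless, $g((R_T(X))^{\top},Y)=g(R_T(X),Y)$. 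Applying both relations of Lemma~\ref{lem:propoperators}, namely $g(R_T(X),Y)=\skal{R(X,Y)}{T}$ and $g(S_{(\nabla_XT)_{\mathfrak{m}}},Y)=-\skal{(\nabla_XT)_{\mathfrak{m}}}{S_Y}$, I obtain the master formula
\begin{equation*}
g_{SM}(Q_T(X)^{h'},Y^{h'})=\skal{R(X,Y)}{T}+\skal{(\nabla_XT)_{\mathfrak{m}}}{S_Y},
\end{equation*}
valid for every $T\in\mathfrak{g}(TN|M)$.

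Specializing to $T=T_{\mathfrak{m}}$ is immediate and requires no cancellation. Since $\mathfrak{h}$ and $\mathfrak{m}$ are orthogonal, $\skal{R(X,Y)}{T_{\mathfrak{m}}}=\skal{R(X,Y)_{\mathfrak{m}}}{T_{\mathfrak{m}}}$, and the Codazzi formula of Lemma~\ref{lem:tensordecomp} identifies $R(X,Y)_{\mathfrak{m}}=\nabla '_XS_Y-\nabla '_YS_X-S_{[X,Y]}$. For the second term, the decomposition formula $(\nabla_XT_{\mathfrak{m}})_{\mathfrak{m}}=\nabla '_XT_{\mathfrak{m}}$ from the same lemma turns $\skal{(\nabla_XT_{\mathfrak{m}})_{\mathfrak{m}}}{S_Y}$ into $\skal{\nabla '_XT_{\mathfrak{m}}}{S_Y}$. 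Together these reproduce the stated identity for $T_{\mathfrak{m}}$.

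The case $T=T_{\mathfrak{h}}$ is the delicate one, and is where I expect the main obstacle to lie. Orthogonality again gives $\skal{R(X,Y)}{T_{\mathfrak{h}}}=\skal{R(X,Y)_{\mathfrak{h}}}{T_{\mathfrak{h}}}$, and the Gauss formula yields $R(X,Y)_{\mathfrak{h}}=R'(X,Y)+[S_X,S_Y]$, while the decomposition formula $(\nabla_XT_{\mathfrak{h}})_{\mathfrak{m}}=[S_X,T_{\mathfrak{h}}]$ converts the second master term into $\skal{[S_X,T_{\mathfrak{h}}]}{S_Y}$. Since the target is merely $\skal{R'(X,Y)}{T_{\mathfrak{h}}}$, I must show the two commutator contributions cancel, i.e. $\skal{[S_X,S_Y]}{T_{\mathfrak{h}}}+\skal{[S_X,T_{\mathfrak{h}}]}{S_Y}=0$. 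This is exactly where the ad-invariance $\skal{[T,T']}{T''}=\skal{T}{[T',T'']}$ recorded before Lemma~\ref{lem:propoperators} enters: applied to each term it gives $\skal{S_X}{[S_Y,T_{\mathfrak{h}}]}$ and $\skal{S_X}{[T_{\mathfrak{h}},S_Y]}$, whose sum vanishes by antisymmetry of the bracket. This cancellation, which relies only on $S_X,S_Y,T_{\mathfrak{h}}$ all being skew-symmetric so that the invariance formula applies, is the sole nontrivial point; everything else is bookkeeping with the decomposition and curvature formulas of Lemma~\ref{lem:tensordecomp}.
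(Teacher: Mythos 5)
Your argument is correct and follows essentially the same route as the paper's proof: reduce $g_{SM}(Q_T(X)^{h'},Y^{h'})$ to $g(P(Q_T(X)),Y)$ via Lemma~\ref{lem:tildeg}, apply Lemma~\ref{lem:propoperators} to get $\skal{R(X,Y)}{T}+\skal{(\nabla_XT)_{\mathfrak{m}}}{S_Y}$, and then specialize using the Gauss/Codazzi and decomposition formulas of Lemma~\ref{lem:tensordecomp}. Your explicit justification of the cancellation $\skal{[S_X,S_Y]}{T_{\mathfrak{h}}}+\skal{[S_X,T_{\mathfrak{h}}]}{S_Y}=0$ via ad-invariance is exactly the step the paper performs silently in its last equality.
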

\begin{proof}
Let $X,Y\in TM$ and $T\in\mathfrak{g}(TN|M)$. Then, by Lemma \ref{lem:propoperators} and Lemma \ref{lem:tensordecomp},
\begin{align*}
g_{SM}(Q_{T_{\mathfrak{h}}}(X)^{h'},Y^{h'}) &=g(P(Q_{T_{\mathfrak{h}}}(X)),Y)=g(R_{T_{\mathfrak{h}}}(X)^{\top}-S_{(\nabla_X T_{\mathfrak{h}})_{\mathfrak{m}}},Y)\\
&=\skal{R(X,Y)}{T_{\mathfrak{h}}}+\skal{(\nabla_XT_{\mathfrak{h}})_{\mathfrak{m}}}{S_Y}\\
&=\skal{R'(X,Y)}{T_{\mathfrak{h}}}+\skal{[S_X,S_Y]}{T_{\mathfrak{h}}}+\skal{[S_X,T_{\mathfrak{h}}]}{S_Y}\\
&=\skal{R'(X,Y)}{T_{\mathfrak{h}}}.
\end{align*}
and, as above,
\begin{align*}
g_{SM}(Q_{T_{\mathfrak{m}}}(X)^{h'},Y^{h'}) &=\skal{R(X,Y)}{T_{\mathfrak{m}}}+\skal{(\nabla_XT_{\mathfrak{m}})_{\mathfrak{m}}}{S_Y}\\
&=\skal{\nabla '_XS_Y-\nabla '_YS_X-S_{[X,Y]}}{T_{\mathfrak{m}}}+\skal{\nabla '_X T_{\mathfrak{m}}}{S_Y}.
\end{align*}
\end{proof}

Now, we can state and prove the most important result of this section.
\begin{prop}\label{prop:operatorL}
The operator $L$ can be described as follows
\begin{equation*}
L_XY=\frac{1}{2}\left( Q_{S_X}(Y)+Q_{S_Y}(X)+P^{-1}(S_{S_{\nabla '_XY+\nabla '_YX}}) \right).
\end{equation*}
\end{prop}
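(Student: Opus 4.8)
The plan is to prove the identity weakly, by pairing both sides against an arbitrary horizontal lift $Z^{h'}$, $Z\in TM$, and invoking the invertibility of $P$ (Lemma \ref{lem:tildeg}). Since $g_{SM}(W^{h'},Z^{h'})=g(W,P(Z))$ for $W,Z\in TM$ and both $g$ and $P$ are nondegenerate, it is enough to verify
$$g_{SM}\bigl((L_XY)^{h'},Z^{h'}\bigr)=\tfrac12\,g_{SM}\Bigl(\bigl(Q_{S_X}(Y)+Q_{S_Y}(X)+P^{-1}(S_{S_{\nabla'_XY+\nabla'_YX}})\bigr)^{h'},Z^{h'}\Bigr)$$
for all $Z\in TM$; the stated formula then follows.

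First I would compute the left-hand side. By Lemma \ref{lem:tildeg} one has $g_{SM}((L_XY)^{h'},Z^{h'})=g(L_XY,P(Z))$, so I feed $L_XY=\tilde\nabla_XY-\nabla'_XY$ into the polarization formula \eqref{eq:GilMedranoeq}, writing $g(L_XY,P(Z))$ as a combination of $g((\nabla'_XP)Y,Z)$, $g((\nabla'_YP)X,Z)$ and the term in $\nabla'_ZP$. Each of these is then expanded via the second identity of Lemma \ref{lem:tildeg}, which rewrites every $g((\nabla'_WP)U,V)$ purely as inner products $\skal{\cdot}{\cdot}$ of the skew--symmetric tensors $S_U$, $S_V$, $\nabla'_WS_U$ and $S_{\nabla'_WU}$. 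The outcome is an explicit sum of six such inner products.

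Next I would evaluate the right-hand side term by term. The key observation is that $S_X=\Pi(X,\cdot^{\top})-A_{(\cdot)^{\bot}}(X)$ interchanges $TM$ and $T^{\bot}M$, so $S_X\in\mathfrak{m}(TN|M)$, i.e. $(S_X)_{\mathfrak{m}}=S_X$; hence the $Q_{S_X}(Y)$ and $Q_{S_Y}(X)$ contributions are computed by the second formula of Lemma \ref{lem:propofQT} (with the appropriate substitution of $X,Y,Z$ and of the subscript tensor). For the remaining term I use $g_{SM}(P^{-1}(V)^{h'},Z^{h'})=g(V,Z)$, a consequence of the symmetry of $P$, together with the second relation of Lemma \ref{lem:propoperators}, giving $g(S_{S_W},Z)=-\skal{S_W}{S_Z}$ with $W=\nabla'_XY+\nabla'_YX$. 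After substitution, the right-hand side is again an explicit sum of inner products of the same type.

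Finally I would match the two expressions. Since $\nabla'$ restricts on $TM$ to the Levi--Civita connection $\nabla^M$, it is torsion free, which lets me replace the commutator terms by $S_{[X,Z]}=S_{\nabla'_XZ}-S_{\nabla'_ZX}$ and $S_{[Y,Z]}=S_{\nabla'_YZ}-S_{\nabla'_ZY}$. Grouping all monomials according to which of $S_X,S_Y,S_Z$ occurs and exploiting the symmetry of $\skal{\cdot}{\cdot}$, the two sums agree monomial by monomial, and the proposition follows. I expect the main obstacle to be purely organizational: correctly tracking the cyclic substitutions $X\leftrightarrow Y\leftrightarrow Z$ and the subscript tensor in Lemma \ref{lem:propofQT}, and reducing every term to the common normal form of inner products of $S$ and $\nabla'S$; once this is done the verification is mechanical.
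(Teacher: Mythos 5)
Your proposal is correct and follows essentially the same route as the paper: both arguments reduce to pairing against $P(Z)$ (equivalently, against $Z^{h'}$ in $g_{SM}$, via Lemma \ref{lem:tildeg}), expand the $Q_{S_X}$, $Q_{S_Y}$ terms with the second formula of Lemma \ref{lem:propofQT} and the remaining term with Lemma \ref{lem:propoperators}, and then identify the result with the expansion of \eqref{eq:GilMedranoeq} through Lemma \ref{lem:tildeg} and the torsion-freeness of $\nabla'$. The only cosmetic difference is that you expand both sides to a common normal form of inner products of $S$ and $\nabla'S$, whereas the paper computes only the right-hand side and recognizes it as the Gil--Medrano expression.
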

\begin{proof}
Denote the right hand side by ${\rm RHS}$. By Lemma \ref{lem:propofQT} we have
\begin{align*}
2g({\rm RHS},P(Z)) &=\skal{\nabla '_Y S_Z-\nabla '_ZS_Y-S_{[Y,Z]}}{S_X}+\skal{\nabla '_YS_X}{S_Z}\\
&+\skal{\nabla '_XS_Z-\nabla '_ZS_X-S_{[X,Z]}}{S_Y}+\skal{\nabla '_XS_Y}{S_Z}\\
&-\skal{S_{\nabla '_XY+\nabla '_YX}}{S_Z}\\
&=\skal{\nabla '_YS_Z-S_{\nabla '_YZ}}{S_X}+\skal{\nabla '_YS_X-S_{\nabla '_YX}}{S_Z}\\
&+\skal{\nabla '_XS_Z-S_{\nabla '_XZ}}{S_Y}+\skal{\nabla '_XS_Y-S_{\nabla '_XY}}{S_Z}\\
&-(\skal{\nabla '_ZS_Y-S_{\nabla '_ZY}}{S_X}+\skal{\nabla '_ZS_X-S_{\nabla '_ZX}}{S_Y}).
\end{align*}
Therefore, by Lemma \ref{lem:tildeg} and equation \eqref{eq:GilMedranoeq} we get desired equality.
\end{proof}

\section{Geometry of adapted orthonormal frame bundle induced by a submanifold}

Consider the frame bundle $L(N)$ over the Riemannian manifold $N$ and equip $L(N)$ with Sasaki--Mok metric. Then the Levi--Civita connection $\nabla^{L(N)}$ at $u\in L(N)$ is of the form \cite{cl,ks0}
\begin{align*}
\nabla^{L(N)}_{X^h}Y^h &=\left(\nabla_X Y\right)^h-\frac{1}{2}\overline{R(X,Y)},\\
\nabla^{L(N)}_{X^h}Y^{v,j} &=\left(\nabla_X Y\right)^{v,j}+\frac{1}{2}\left( R(u_j,Y)X \right)^h,\\
\nabla^{L(N)}_{X^{v,i}}Y^h &=\frac{1}{2}\left( R(u_j,X)Y \right)^h,\\
\nabla^{L(N)}_{X^{v,i}}Y^{v,j} &=0.
\end{align*}

By Lemma \ref{lem:lifttensor} the vector $\nabla^{L(N)}_{X^h}Y^h$ is tangent to $O(N)$ at the elements $u\in O(N)$. Moreover, by the definition of a vertical vector field $\bar{T}$ induced by a tensor field $T$, we get
\begin{align}
\nabla^{L(N)}_{\bar{T}}X^h &=\frac{1}{2}\left( \sum_i R(u_i,T(u_i))X \right)^{h}, \notag \\
\nabla^{L(N)}_{X^h}\bar{T} &=\frac{1}{2}\left( \sum_i R(u_i,T(u_i))X \right)^{h}+\overline{\nabla_X T}, \label{eq:nablaTbar}\\
\nabla^{L(N)}_{\bar{T}}\bar{T'} &=\overline{T'\circ T}. \notag
\end{align}
It follows by Lemma \ref{lem:lifttensor} that at $u\in O(N)$ the first two relations of \eqref{eq:nablaTbar} are relations for the Levi--Civita connection $\nabla^{O(N)}$ on $O(N)$. Since for $T$ and $T'$ the decomposition
\begin{equation*}
T'\circ T=\frac{1}{2}[T',T]+\frac{1}{2}\left( T'\circ T+T\circ T' \right),
\end{equation*} 
where $[T',T]$ is a commutator of $T'$ and $T$, is orthogonal and $[T',T]\in\mathfrak{g}(TN)$, again by Lemma \ref{lem:lifttensor}, we get $\nabla^{O(N)}_{\bar{T}}\bar{T}'=\frac{1}{2}\overline{[T',T]}$. Moreover, vector $\sum_i R(u_i,T(u_i))X$ is independent of the choice of the orthonormal basis $u$. Finally
\begin{equation}\label{eq:nablaON}
\begin{split}
\nabla^{O(N)}_{X^h}Y^h &=\left(\nabla_X Y\right)^h-\frac{1}{2}\overline{R(X,Y)},\\
\nabla^{O(N)}_{\bar{T}}X^h &=\frac{1}{2}R_T(X)^h, \\
\nabla^{O(N)}_{X^h}\bar{T} &=\frac{1}{2}R_T(X)^h+\overline{\nabla_X T}, \\
\nabla^{O(N)}_{\bar{T}}\bar{T'} &=\frac{1}{2}\overline{[T',T]}.
\end{split}
\end{equation}
The Levi--Civita connection of $O(N)$ was first computed by Kowalski and Sekizawa \cite{ks1,ks2}, but with the use of different vertical vector fields, namely the vector fields $(T_{ij})^{\ast}$ defined below.

\begin{rem}\label{rem:antiisom}
By the formula \eqref{eq:nablaTbar} for the Levi--Civita connection $\nabla^{L(N)}$ on the vector fields $\bar{T},\bar{T'}$ we get that
\begin{equation*}
[\bar{T},\bar{T}']=-\overline{[T,T']}.
\end{equation*}
Therefore, the subspace of invariant vertical vector fields forms a Lie algebra, which is anti--isomorphic to the Lie algebra of $(1,1)$--tensor fields of the tangent space $TN$. Moreover, we get
\begin{equation*}
\nabla^{O(N)}_{\bar{T}}\bar{T'}=\frac{1}{2}[\bar{T},\bar{T'}].
\end{equation*}
Hence, the Levi--Civita connection on the invariant vertical vector fields is just the bi--invariant connection.
\end{rem}

Put
\begin{equation*}
T_{ij}=\frac{1}{\sqrt{2}}\left(E^i_j-E^j_i\right)\in o(n).
\end{equation*}
Then, by the equality $(E^i_j)^{\ast}_u=u_i^{v,j}$, we have
\begin{equation*}
g_{SM}((T_{ij})^*_u,(T_{kl})^*_u)=\delta_{ik}\delta_{jl}-\delta_{il}\delta_{jk}.
\end{equation*}
Therefore 
\begin{equation*}
g_{SM}(A^{\ast}_u,B^{\ast}_u)=-{\rm tr}(A\cdot B),
\end{equation*}
hence Sasaki--Mok metric $g_{SM}$ is on the vertical subspace of the orthonormal frame bundle $O(N)$ the metric induced by the Killing form $K$ on the Lie algebra $\mathfrak{g}$,
\begin{equation*}
K(U,V)=-{\rm tr}(U\cdot V),\quad U,V\in\mathfrak{g}.
\end{equation*}

Let $M$ be a submanifold of $N$ and $O(M,N)$ the bundle of frames adapted to $M$. Notice that vectors 
\begin{equation*}
X^{h'},\quad (T_{AB})^{\ast}, (T_{\alpha,\beta})^{\ast},
\end{equation*}
where $X\in TM$, are tangent to $O(M,N)$, whereas vectors
\begin{equation*}
Z^{h},\quad (T_{A\alpha})^{\ast}_u-(S_{u_A}(u_{\alpha}))^h_u,
\end{equation*}
where $Z\in T^{\bot}M$, are orthogonal to $O(M,N)$ in $O(N)$.

Denote by $\nabla^{O(M,N)}$ the Levi--Civita connection of $O(M,N)$ and by $\Pi^{O(M,N)}$ the second fundamental form of $O(M,N)$ in $O(N)$ with respect to Sasaki--Mok metric $g_{SM}$.

Adopt the notation from the previous sections.

\begin{prop}\label{prop:horizontaldecomp}
Let $X\in TM$ and $u\in O(M,N)$. The decomposition of $X^h_u$ into tangent and orthogonal part to $O(M,N)$ in $O(N)$ equals
\begin{equation}\label{eq:decompositionXh}
X^h_u=P^{-1}(X)^{h'}_u+\left(-\overline{S_{P^{-1}(X)}}-2\sum_A (S_{e_A}^2(P^{-1}(X)))^h_u\right).
\end{equation}
\end{prop}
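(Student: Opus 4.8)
The plan is to verify the formula in two stages: first that the stated right-hand side is algebraically equal to $X^h_u$, and then that its first summand is tangent to $O(M,N)$ while the bracketed summand is orthogonal to it. Abbreviate $W=P^{-1}(X)$. By Proposition \ref{prop:lifttoomn} we have $W^{h'}_u=W^h_u+\overline{S_W}$, so the right-hand side collapses to
\[
W^h_u-2\sum_A(S_{e_A}^2(W))^h_u=\left(W-2\sum_A S_{e_A}^2(W)\right)^h_u=P(W)^h_u=X^h_u,
\]
using the definition $P=\mathrm{id}-2\sum_A S_{e_A}^2$. This establishes the identity, and it only remains to assign each piece to its subspace.

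The first summand $P^{-1}(X)^{h'}_u$ is tangent to $O(M,N)$, being the $\nabla'$-horizontal lift of the tangent vector $W\in TM$. For the bracketed term $\Xi:=-\overline{S_W}-2\sum_A(S_{e_A}^2(W))^h_u$ I would test orthogonality against a spanning set of $T_uO(M,N)$, namely the lifts $Y^{h'}$ with $Y\in TM$ together with the vertical fields $(T_{AB})^\ast$ and $(T_{\alpha\beta})^\ast$. Against the latter only the vertical part $-\overline{S_W}$ contributes, the horizontal part being $g_{SM}$-orthogonal to every vertical vector. The relevant structural fact is that $S_W$ interchanges tangent and normal directions, so $S_W\in\mathfrak{m}(TN|M)$ and, for adapted $u$, $\overline{S_W}_u=(u^{-1}S_W u)^\ast_u$ represents an element of $\mathfrak{m}$; since $g_{SM}$ restricts to the Killing-form inner product on the vertical subspace and $T_{AB},T_{\alpha\beta}\in\mathfrak{h}$, orthogonality follows at once from $\mathfrak{m}\perp\mathfrak{h}$.

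The crux is orthogonality against $Y^{h'}=Y^h+\overline{S_Y}$. Expanding $g_{SM}(\Xi,Y^{h'})$ and discarding the horizontal--vertical cross terms, exactly two contributions survive: the vertical pairing $-g_{SM}(\overline{S_W},\overline{S_Y})=-\skal{S_W}{S_Y}$, coming from the Sasaki--Mok metric (cf.\ the computation in Lemma \ref{lem:tildeg}), and the horizontal pairing $-2\sum_A g(S_{e_A}^2(W),Y)=-g(S_{S_W},Y)$. Since $S_W\in\mathfrak{m}(TN|M)$, Lemma \ref{lem:propoperators} yields $g(S_{S_W},Y)=-\skal{S_W}{S_Y}$, so the two terms cancel and $g_{SM}(\Xi,Y^{h'})=0$. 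This cancellation---precisely the point where the definition of $P$ is matched against Lemma \ref{lem:propoperators}---is the heart of the argument; the remaining verifications are routine bookkeeping with the orthogonal decomposition $T_uO(N)=\mathcal{H}_u\oplus\mathcal{V}^1_u\oplus\dots\oplus\mathcal{V}^{p+n}_u$.
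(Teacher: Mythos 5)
Your proof is correct and follows essentially the same route as the paper: the algebraic identity via $W^{h'}=W^h+\overline{S_W}$ and the definition of $P$, followed by checking $g_{SM}$-orthogonality of the bracketed term against $\mathcal{H}'$ (where your appeal to Lemma \ref{lem:propoperators} replaces the paper's direct two-line cancellation) and against the vertical part of $TO(M,N)$ (which the paper dismisses as easy and you justify via $\mathfrak{m}\perp\mathfrak{h}$). The only cosmetic difference is that you substitute $W=P^{-1}(X)$ at the outset rather than at the end.
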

\begin{proof}
Since $X^{h'}=X^h_u+\overline{S_X}$, it follows that
\begin{equation}\label{eq:PXh}
X^{h'}_u+\Big(-\overline{S_X}-2\sum_A (S_{e_A}^2(X))^h_u\Big)=
X^h_u-2\sum_A \left(S_{e_A}^2(X)\right)^h_u=P(X)^h_u.
\end{equation}

Put $V_u=\overline{S_X}+2\sum_A (S_{e_A}^2(X))^h_u$. Then, by above,
\begin{align*}
g_{SM}(Y^{h'},V_u) &=
g_{SM}(\overline{S_X},\overline{S_Y})+g(Y,2\sum_A S_{e_A}^2(X))\\
&=2\sum_Ag(S_{u_A}(X),S_{u_A}(Y))-2\sum_Ag(S_{u_A}(X),S_{u_A}(Y))\\
&=0.
\end{align*}
Therefore the vector $V_u$ is orthogonal to subspace $\mathcal{H}'$. It is easy to see that $V_u$ is also orthogonal to the vertical subspace of $O(N,M)|M$. Thus $V_u\in T^{\bot}O(M,N)$. This implies that \eqref{eq:PXh} is a decomposition into tangent and orthogonal parts. Substituting $X$ by $P^{-1}(X)$ in \eqref{eq:PXh} we get \eqref{eq:decompositionXh}.
\end{proof}

\begin{prop}\label{prop:veritdecomp}
Let $T\in\mathfrak{g}(TN|M)$. The following decomposition 
\begin{multline}\label{eq:vertdecomp}
\bar{T}=\left(\bar{T_{\mathfrak{h}}}-P^{-1}(S_{T_{\mathfrak{m}}})^{h'}\right)+\\
\left( \bar{T_{\mathfrak{m}}}+(S_{T_{\mathfrak{m}}})^h+\overline{S_{P^{-1}(S_{T_{\mathfrak{m}}})}}+2\sum_A (S_{e_A}^2(P^{-1}(S_{T_{\mathfrak{m}}})))^h\right)
\end{multline}
is a decomposition into tangent and orthogonal part to $O(M,N)$ in $O(N)$. 
\end{prop}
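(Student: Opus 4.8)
The plan is to recognize the two brackets as the tangent and normal components of $\bar T$ by one algebraic simplification, after which orthogonality follows from the Lemmas already proved. Set $W=P^{-1}(S_{T_{\mathfrak{m}}})$; this lies in $TM$ because $S_{T_{\mathfrak{m}}}\in TM$ (as $T_{\mathfrak{m}}$ interchanges $TM$ and $T^{\bot}M$) and $P:TM\to TM$. The defining relation $P(W)=S_{T_{\mathfrak{m}}}$, i.e. $W-2\sum_A S_{e_A}^2(W)=S_{T_{\mathfrak{m}}}$ by the definition of $P$, makes the horizontal terms of the second bracket collapse:
\begin{equation*}
(S_{T_{\mathfrak{m}}})^h+2\sum_A (S_{e_A}^2(W))^h=W^h .
\end{equation*}
Combined with $W^{h'}=W^h+\overline{S_W}$ from Proposition \ref{prop:lifttoomn}, the second bracket becomes $\bar{T_{\mathfrak{m}}}+W^{h'}$ and the first becomes $\bar{T_{\mathfrak{h}}}-W^{h'}$, so their sum is $\bar{T_{\mathfrak{h}}}+\bar{T_{\mathfrak{m}}}=\bar T$ and the displayed equality is verified. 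From here I organize everything around $\bar{T_{\mathfrak{h}}}$, $\bar{T_{\mathfrak{m}}}$ and the single lift $W^{h'}$.

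To see that the first bracket is tangent to $O(M,N)$, note that $W^{h'}$ is the $\nabla'$-horizontal lift of a vector of $TM$ and so is tangent by construction. For $\bar{T_{\mathfrak{h}}}$, at an adapted frame $u$ one has $(\bar{T_{\mathfrak{h}}})_u=(u^{-1}T_{\mathfrak{h}}u)^{\ast}_u$ (as recorded after Proposition \ref{prop:verticalinvariant}); since $T_{\mathfrak{h}}$ preserves both $TM$ and $T^{\bot}M$ and $u$ is adapted, the matrix $u^{-1}T_{\mathfrak{h}}u$ is block diagonal, hence lies in $\mathfrak{h}=o(p)\oplus o(n)$. Thus $\bar{T_{\mathfrak{h}}}$ is a fundamental vector field of a structure-group element and is tangent to $O(M,N)$.

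The main step is orthogonality of $V:=\bar{T_{\mathfrak{m}}}+W^{h'}$, which I would check by pairing against the spanning set $\{Y^{h'},(T_{AB})^{\ast},(T_{\alpha\beta})^{\ast}\}$ of $T_uO(M,N)$, where $Y\in TM$. The pairings with $(T_{AB})^{\ast}$ and $(T_{\alpha\beta})^{\ast}$ vanish at once: the horizontal part $W^h$ of $V$ is $g_{SM}$-orthogonal to every vertical vector, while the vertical content of $V$ comes from $T_{\mathfrak{m}}$ and $S_W$, both in $\mathfrak{m}(TN|M)$, whereas $(T_{AB})^{\ast},(T_{\alpha\beta})^{\ast}$ correspond to $\mathfrak{h}$-valued tensors; concretely $g(T_{\mathfrak{m}}(u_B),u_A)=g(S_W(u_B),u_A)=0$ and $g(T_{\mathfrak{m}}(u_\beta),u_\alpha)=g(S_W(u_\beta),u_\alpha)=0$ because $T_{\mathfrak{m}}$ and $S_W$ interchange tangent and normal directions. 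The decisive pairing is with $Y^{h'}$: by Lemma \ref{lem:tildeg} and Lemma \ref{lem:propoperators},
\begin{align*}
g_{SM}(W^{h'},Y^{h'}) &=g(W,P(Y)),\\
g_{SM}(\bar{T_{\mathfrak{m}}},Y^{h'}) &=\skal{T_{\mathfrak{m}}}{S_Y}=-g(S_{T_{\mathfrak{m}}},Y)=-g(P(W),Y),
\end{align*}
so that $g_{SM}(V,Y^{h'})=g(W,P(Y))-g(P(W),Y)=0$ by the symmetry of $P$ from Lemma \ref{lem:tildeg}. Since $V$ annihilates a spanning set of $T_uO(M,N)$, it is orthogonal to $O(M,N)$, which completes the proof. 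I expect the only genuine obstacle to be the bookkeeping of the first step that rewrites the four-term second bracket as $\bar{T_{\mathfrak{m}}}+W^{h'}$: it hinges on reading $P$ through its definition $P=\mathrm{id}-2\sum_A S_{e_A}^2$ together with the lift formula of Proposition \ref{prop:lifttoomn}. Once this simplification is in place, the two Lemmas collapse the whole orthogonality computation to the single transparent identity $g(W,P(Y))=g(P(W),Y)$.
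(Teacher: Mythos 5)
Your proof is correct, and it reorganizes the argument in a way that differs mildly but genuinely from the paper's. The paper first checks that $\bar{T_{\mathfrak{m}}}+(S_{T_{\mathfrak{m}}})^h$ is normal to $O(M,N)$ (the pairing with $X^{h'}$ vanishing by the identity $g(S_{T_{\mathfrak{m}}},X)=-\skal{T_{\mathfrak{m}}}{S_X}$ of Lemma \ref{lem:propoperators}), writes $\bar{T}=\bigl(\bar{T_{\mathfrak{h}}}-(S_{T_{\mathfrak{m}}})^h\bigr)+\bigl(\bar{T_{\mathfrak{m}}}+(S_{T_{\mathfrak{m}}})^h\bigr)$, and then invokes Proposition \ref{prop:horizontaldecomp} applied to $(-S_{T_{\mathfrak{m}}})^h$ to extract the tangent part $-P^{-1}(S_{T_{\mathfrak{m}}})^{h'}$; the normal remainder from that proposition is exactly what produces the last two terms of the second bracket in \eqref{eq:vertdecomp}. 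You instead observe up front that, with $W=P^{-1}(S_{T_{\mathfrak{m}}})$, the relation $P(W)=S_{T_{\mathfrak{m}}}$ collapses the two brackets to $\bar{T_{\mathfrak{h}}}-W^{h'}$ and $\bar{T_{\mathfrak{m}}}+W^{h'}$, and you verify orthogonality of the latter directly against the spanning set $\{Y^{h'},(T_{AB})^{\ast},(T_{\alpha\beta})^{\ast}\}$; your decisive cancellation $g(W,P(Y))-g(P(W),Y)=0$ is the same Lemma \ref{lem:propoperators} identity filtered through the symmetry of $P$ from Lemma \ref{lem:tildeg}. The two routes are computationally equivalent: yours buys a transparent identification of what the two brackets actually are (and an explicit tangency argument for $\bar{T_{\mathfrak{h}}}$ via block-diagonality of $u^{-1}T_{\mathfrak{h}}u$, which the paper leaves implicit), while the paper's buys economy by reusing the already-established horizontal decomposition instead of recomputing the pairing with $Y^{h'}$ for the enlarged normal vector.
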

\begin{proof}
By the definition of Sasaki--Mok metric we have
\begin{equation*}
g_{SM}(\bar{T_{\mathfrak{h}}},\bar{T_{\mathfrak{m}}})=\skal{T_{\mathfrak{h}}}{T_{\mathfrak{m}}}=0.
\end{equation*}
Therefore the decomposition
\begin{equation*}
\bar{T}=\bar{T_{\mathfrak{h}}}+\bar{T_{\mathfrak{h}}}
\end{equation*}
is orthogonal. Moreover vector $\bar{T_{\mathfrak{m}}}+(S_{T_{\mathfrak{m}}})^h$ is orthogonal to vectors $T_{AB}^{\ast}$, $T_{\alpha\beta}^{\ast}$ and for any $X\in TM$ by Lemma \ref{lem:propoperators}
\begin{align*}
g_{SM}(\bar{T_{\mathfrak{m}}}+(S_{T_{\mathfrak{m}}})^h,X^{h'}) &=g_{SM}(\bar{T_{\mathfrak{m}}}+(S_{T_{\mathfrak{m}}})^h,X^h+\overline{S_X})\\
&=\skal{T_{\mathfrak{m}}}{S_X}+g(S_{T_{\mathfrak{m}}},X)\\
&=0
\end{align*}
Therefore $\bar{T_{\mathfrak{m}}}+(S_{T_{\mathfrak{m}}})^h\in T^{\bot}O(M,N)$. We have
\begin{equation*}
\bar{T}=\bar{T_{\mathfrak{h}}}-(S_{T_{\mathfrak{m}}})^h+\bar{T_{\mathfrak{m}}}+(S_{T_{\mathfrak{m}}})^h.
\end{equation*}
To prove \eqref{eq:vertdecomp} it suffices to apply Proposition \ref{prop:horizontaldecomp} to the vector $(-S_{T_{\mathfrak{m}}})^h$.
\end{proof}

We will now compute the Levi--Civita connection $\nabla^{O(N)}$ on the vector fields tangent to $O(M,N)$. By Lemma \ref{lem:tensordecomp} we have
\begin{equation}\label{eq:nablaONtangentvect}
\begin{split}
\nabla^{O(N)}_{X^{h'}}Y^{h'} &=(\nabla_XY)^h+\frac{1}{2}(R_{S_X}(Y)+R_{S_Y}(X))^h-\overline{R'(X,Y)}\\
&+\frac{1}{2}\overline{\nabla '_XS_Y+\nabla '_YS_X}+\frac{1}{2}\overline{S_{[X,Y]}}\\
\nabla^{O(N)}_{X^{h'}}\bar{T} &=\frac{1}{2}R_T(X)^h+\overline{\nabla '_XT}+\frac{1}{2}\overline{(\nabla_X T)_{\mathfrak{m}}}\\
\nabla^{O(N)}_{\bar{T}}Y^{h'} &=\frac{1}{2}R_T(Y)^h+\frac{1}{2}\overline{(\nabla_Y T)_{\mathfrak{m}}}.
\end{split}
\end{equation}

\subsection{Intrinsic geometry}

Now, we are ready to compute the Levi--Civita connection $\nabla^{O(M,N)}$ and curvatures of $O(M,N)$. 

\begin{prop}\label{prop:LeviCivitaOMN}
Let $X,Y\in\Gamma(TM)$ and $T,T'\in\mathfrak{h}(TN|M)$. The Levi--Civita connection $\nabla^{O(M,N)}$ of $O(M,N)$ equals
\begin{align*}
\nabla^{O(M,N)}_{X^{h'}}Y^{h'} &=(\tilde{\nabla}_XY)^{h'}-\frac{1}{2}\overline{R'(X,Y)},\\
\nabla^{O(M,N)}_{X^{h'}}\bar{T} &=\frac{1}{2}Q_T(X)^{h'}+\overline{\nabla '_X T},\\
\nabla^{O(M,N)}_{\bar{T}}Y^{h'} &=\frac{1}{2}Q_T(Y)^{h'},\\
\nabla^{O(M,N)}_{\bar{T}}\bar{T'} &=\frac{1}{2}[\bar{T},\bar{T}'].
\end{align*}
\end{prop}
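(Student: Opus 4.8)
The plan is to invoke the Gauss formula: since $O(M,N)$ is a Riemannian submanifold of $(O(N),g_{SM})$, its Levi--Civita connection is the tangential projection of the ambient one, $\nabla^{O(M,N)}_U V=(\nabla^{O(N)}_U V)^{\top}$ for $U,V$ tangent to $O(M,N)$. So I would take the ambient connection values on the four pairs of tangent fields, read off from \eqref{eq:nablaON} and \eqref{eq:nablaONtangentvect}, and project each onto $TO(M,N)$ using the orthogonal decompositions of Propositions \ref{prop:horizontaldecomp} and \ref{prop:veritdecomp}. The two rules that do all the work are: a horizontal lift $V^h$ of $V\in TN|M$ has tangential part $P^{-1}(V^{\top})^{h'}$, its normal component $V^{\bot}$ lifting orthogonally to $O(M,N)$; and a vertical field $\overline{T'}$ with $T'\in\mathfrak{m}(TN|M)$ has tangential part $-P^{-1}(S_{T'})^{h'}$, while $\overline{T'}$ with $T'\in\mathfrak{h}(TN|M)$ is already tangent by Lemma \ref{lem:lifttensor}.

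The last identity is immediate: for $T,T'\in\mathfrak{h}(TN|M)$ the commutator $[T',T]$ again lies in $\mathfrak{h}$, so $\nabla^{O(N)}_{\overline{T}}\overline{T'}=\tfrac12\overline{[T',T]}$ is tangent and equals $\tfrac12[\overline{T},\overline{T'}]$ by Remark \ref{rem:antiisom}. For the two mixed identities I would apply the projection rules to $\tfrac12 R_T(\cdot)^h+\tfrac12\overline{(\nabla_X T)_{\mathfrak{m}}}$, plus the extra term $\overline{\nabla'_X T}$ in the case $\nabla^{O(N)}_{X^{h'}}\overline{T}$. The horizontal piece contributes $\tfrac12 P^{-1}((R_T(\cdot))^{\top})^{h'}$ and the $\mathfrak{m}$-vertical piece contributes $-\tfrac12 P^{-1}(S_{(\nabla_X T)_{\mathfrak{m}}})^{h'}$; by the very definition $Q_T(X)=P^{-1}((R_T(X))^{\top}-S_{(\nabla_X T)_{\mathfrak{m}}})$ these collapse to $\tfrac12 Q_T(\cdot)^{h'}$. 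The term $\overline{\nabla'_X T}$ survives untouched because $T\in\mathfrak{h}$ forces $\nabla'_X T\in\mathfrak{h}$ by Lemma \ref{lem:tensordecomp}. This gives the second and third formulas.

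The first formula is the heart of the matter, and the expected main obstacle. Writing $X^{h'}=X^h+\overline{S_X}$ and expanding through \eqref{eq:nablaON}, I would use the Gauss--Codazzi decomposition of Lemma \ref{lem:tensordecomp} to split $\nabla^{O(N)}_{X^{h'}}Y^{h'}$ into a horizontal part, a vertical $\mathfrak{m}(TN|M)$-part, and a vertical $\mathfrak{h}(TN|M)$-part. The $\mathfrak{h}$-part is already tangent and, once the $[S_X,S_Y]$ contributions cancel against $R(X,Y)_{\mathfrak{h}}=R'(X,Y)+[S_X,S_Y]$, collapses to $-\tfrac12\overline{R'(X,Y)}$. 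Projecting the remaining horizontal and $\mathfrak{m}$-vertical parts, the claim reduces to showing that the resulting operand inside $P^{-1}(\cdot)^{h'}$ equals $P(\tilde{\nabla}_X Y)$. There I would recognise $\tfrac12(Q_{S_X}(Y)+Q_{S_Y}(X))$ from the definition of $Q$ together with $(\nabla_Y S_X)_{\mathfrak{m}}=\nabla'_Y S_X$ (Lemma \ref{lem:tensordecomp}), then use Proposition \ref{prop:operatorL} to rewrite this combination as $P(L_X Y)$ up to the term $\tfrac12 S_{S_{\nabla'_X Y+\nabla'_Y X}}$. The leftover terms $-\tfrac12 S_{S_{\nabla'_X Y+\nabla'_Y X}}-\tfrac12 S_{S_{[X,Y]}}$ combine, via the torsion-free identity $(\nabla'_X Y+\nabla'_Y X)+[X,Y]=2\nabla'_X Y$ and linearity of $S_{S_{(\cdot)}}$, into $-S_{S_{\nabla'_X Y}}=P(\nabla'_X Y)-\nabla'_X Y$; hence the operand is $\nabla'_X Y+P(L_X Y)-S_{S_{\nabla'_X Y}}=P(\nabla'_X Y+L_X Y)=P(\tilde{\nabla}_X Y)$, and $P^{-1}$ of it is $(\tilde{\nabla}_X Y)^{h'}$. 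The difficulty is entirely in this bookkeeping: the projections are routine once Propositions \ref{prop:horizontaldecomp} and \ref{prop:veritdecomp} are in hand, but matching the operand to $P(\tilde{\nabla}_X Y)$ requires threading Proposition \ref{prop:operatorL}, the relation $P=\mathrm{id}-S_{S_{(\cdot)}}$, and torsion-freeness together while keeping every factor of $\tfrac12$ and every sign exactly right.
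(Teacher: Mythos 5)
Your proposal is correct and follows essentially the same route as the paper, whose proof is a one-line citation of exactly the ingredients you use: the tangential projections from Propositions \ref{prop:horizontaldecomp} and \ref{prop:veritdecomp}, the ambient formulas \eqref{eq:nablaONtangentvect}, Proposition \ref{prop:operatorL}, and the identity $P=\mathrm{id}-S_{S_{(\cdot)}}$. Your detailed bookkeeping for the first formula (absorbing $\tfrac12(Q_{S_X}(Y)+Q_{S_Y}(X))$ into $P(L_XY)$ and combining the leftover $S_{S_{(\cdot)}}$ terms via torsion-freeness) checks out and correctly yields the coefficient $-\tfrac12$ on $\overline{R'(X,Y)}$.
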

\begin{proof}
Follows by Propositions \ref{prop:horizontaldecomp} and \ref{prop:veritdecomp}, Proposition \ref{prop:operatorL}, relations \eqref{eq:nablaONtangentvect} and the fact that
\begin{equation*}
P^{-1}(X)=X+P^{-1}(S_{S_X}),\quad X\in TM.
\end{equation*}
\end{proof}

\begin{prop}\label{prop:curvatureOMN}
Let $X,Y,Z\in TM$, $T,T',T''\in\mathfrak{h}(TN|M)$. The curvature tensor $R^{O(M,N)}$ equals
\begin{align*}
R^{O(M,N)}(X^{h'},Y^{h'})Z^{h'} &=(\tilde{R}(X,Y)Z)^{h'}-\frac{1}{2}\overline{(D_XR')(Y,Z)-(D_YR')(X,Z)}\\
&-\frac{1}{4}\left( Q_{R'(Y,Z)}(X)-Q_{R'(X,Z)}(Y)-2Q_{R'(X,Y)}(Z) \right)^{h'},\\
R^{O(M,N)}(X^{h'},Y^{h'})\bar{T} &=\frac{1}{2}\left( (D_XQ_T)(Y)-(D_YQ_T)(X) \right)^{h'}\\
&+\frac{1}{2}\overline{[R'(X,Y),T]}
-\frac{1}{4}\overline{R'(X,Q_T(Y))-R'(Y,Q_T(X))},\\
R^{O(M,N)}(X^{h'},\bar{T})Z^{h'} &=\frac{1}{2}((D_XQ_T)(Z))^{h'}
-\frac{1}{4}\overline{R'(X,Q_T(Z))-[R'(X,Z),T]}
\end{align*}
and
\begin{align*}
R^{O(M,N)}(X^{h'},\bar{T})\bar{T'} &=-\frac{1}{4}\left( Q_{[T,T']}(X)+Q_T(Q_{T'}(X) \right)^{h'},\\
R^{O(M,N)}(\bar{T},\bar{T'})Z^{h'} &=\left( \frac{1}{4}[Q_T,Q_{T'}](Z)+\frac{1}{2}Q_{[T,T']}(Z) \right)^{h'},\\
R^{O(M,N)}(\bar{T},\bar{T}')\bar{T''} &=-\frac{1}{4}\overline{[[T,T'],T'']},\\
\end{align*}
where
\begin{align*}
(D_XR')(Y,Z) &=\nabla '_X R'(Y,Z)-R'(\tilde{\nabla}_XY,Z)-R'(Y,\tilde{\nabla}_XZ),\\
(D_XQ_T)(Y) &=\tilde{\nabla}_XQ_T(Y)-Q_{\nabla '_XT}(Y)-Q_T(\tilde{\nabla}_XY).
\end{align*}
\end{prop}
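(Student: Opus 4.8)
The plan is to compute each of the six curvature expressions directly from the definition $R^{O(M,N)}(U,V)W=\nabla^{O(M,N)}_U\nabla^{O(M,N)}_VW-\nabla^{O(M,N)}_V\nabla^{O(M,N)}_UW-\nabla^{O(M,N)}_{[U,V]}W$, feeding in the four connection formulas of Proposition \ref{prop:LeviCivitaOMN}. Since $\nabla^{O(M,N)}$ is torsion--free, the required brackets are read off from those same formulas: subtracting the two $h'$--$h'$ formulas and using that $\tilde\nabla$ is torsion--free gives $[X^{h'},Y^{h'}]=[X,Y]^{h'}-\overline{R'(X,Y)}$; subtracting the two mixed formulas gives $[X^{h'},\bar T]=\overline{\nabla'_XT}$; and the purely vertical bracket is $[\bar T,\bar{T'}]=-\overline{[T,T']}$ by Remark \ref{rem:antiisom}. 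Throughout one must observe that the tensors that arise, namely $R'(X,Y)$ and $[T,T']$, again lie in $\mathfrak h(TN|M)$, because $\nabla'$ respects the decomposition \eqref{eq:tensordecomp} and $\mathfrak h$ is a subalgebra; this is exactly what permits reapplying Proposition \ref{prop:LeviCivitaOMN} to them.

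For each case I would expand fully, then split the result into its horizontal $(\cdot)^{h'}$ part and its vertical $\overline{(\cdot)}$ part and simplify the two separately. In the $h'$--$h'$--$h'$ case the three second--order $\tilde\nabla$--terms assemble into the leading $(\tilde R(X,Y)Z)^{h'}$, while differentiating the vertical piece $-\tfrac12\overline{R'(Y,Z)}$ along $X^{h'}$ produces, via $\nabla^{O(M,N)}_{X^{h'}}\bar T=\tfrac12 Q_T(X)^{h'}+\overline{\nabla'_XT}$, both a horizontal $Q_{R'}$--contribution and a vertical $\overline{\nabla'_XR'(Y,Z)}$--contribution; the extra weight $2Q_{R'(X,Y)}(Z)$ comes from $\nabla^{O(M,N)}_{-\overline{R'(X,Y)}}Z^{h'}$ in the bracket term. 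The remaining vertical terms are then recognized, after using the Leibniz rule for $\nabla'$ and the torsion identity $\tilde\nabla_XY-\tilde\nabla_YX=[X,Y]$, as precisely $-\tfrac12\overline{(D_XR')(Y,Z)-(D_YR')(X,Z)}$ by the definition of $D_XR'$.

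The mixed cases $R^{O(M,N)}(X^{h'},Y^{h'})\bar T$ and $R^{O(M,N)}(X^{h'},\bar T)Z^{h'}$ proceed identically: the covariant derivatives of $Q_T$ reassemble into $D_XQ_T$, while the commutator term $\overline{[R'(X,Y),T]}$ arises from the Ricci identity $\nabla'_X\nabla'_YT-\nabla'_Y\nabla'_XT-\nabla'_{[X,Y]}T=[R'(X,Y),T]$ for $\nabla'$ acting on the $(1,1)$--tensor $T$, corrected by the bracket contribution $-\tfrac12\overline{[R'(X,Y),T]}$ coming from $\nabla^{O(M,N)}_{\bar T}\bar{T'}=\tfrac12[\bar T,\bar{T'}]$ and $[\bar T,\overline{R'(X,Y)}]=-\overline{[T,R'(X,Y)]}$. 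For $R^{O(M,N)}(X^{h'},\bar T)\bar{T'}$ the only horizontal survivors are $-\tfrac14Q_{[T,T']}(X)^{h'}-\tfrac14Q_T(Q_{T'}(X))^{h'}$, all vertical terms cancelling by the Leibniz identity $\nabla'_X[T,T']=[\nabla'_XT,T']+[T,\nabla'_XT']$. The two companion cases $R^{O(M,N)}(\bar T,\bar{T'})Z^{h'}$ and $R^{O(M,N)}(\bar T,\bar{T'})\bar{T''}$ are purely algebraic consequences of the bi--invariant rule $\nabla^{O(M,N)}_{\bar T}\bar{T'}=\tfrac12[\bar T,\bar{T'}]$: the former unwinds, using $[\bar T,\bar{T'}]=-\overline{[T,T']}$, to $\tfrac14[Q_T,Q_{T'}](Z)+\tfrac12Q_{[T,T']}(Z)$, and the latter, by the Jacobi identity, to the standard bi--invariant value $-\tfrac14\overline{[[T,T'],T'']}$.

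I expect the main obstacle to be the $h'$--$h'$ cases, where the horizontal and vertical components genuinely interact. The delicate point is tracking the half--integer coefficients through the cross terms so that the horizontal remainder closes up with the asymmetric weight $-\tfrac14(Q_{R'(Y,Z)}(X)-Q_{R'(X,Z)}(Y)-2Q_{R'(X,Y)}(Z))$, while simultaneously the vertical remainder reorganizes exactly into the symbol $D_XR'-D_YR'$ (respectively into $D_XQ_T$ together with $[R'(X,Y),T]$). Once the three bracket formulas and the membership of $R'(X,Y)$ and $[T,T']$ in $\mathfrak h(TN|M)$ are established, everything else is bookkeeping.
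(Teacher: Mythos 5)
Your proposal is correct and takes the same route as the paper, whose proof of this proposition is literally the one line ``Direct consequence of Proposition \ref{prop:LeviCivitaOMN}''; you have simply filled in the intended computation, and your bracket formulas, the observation that $R'(X,Y)$ and $[T,T']$ remain in $\mathfrak{h}(TN|M)$, and the bookkeeping of the coefficients all check out.
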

\begin{proof}
Direct consequence of Proposition \ref{prop:LeviCivitaOMN}.
\end{proof}

\begin{prop}\label{prop:sectionalOMN}
The sectional curvature $\kappa^{O(M,N)}$ of $O(M,N)$ is of the form
\begin{align*}
\kappa^{O(M,N)}(X^{h'},Y^{h'}) &=\tilde{\kappa}(X,Y)-\frac{3}{4}\|R'(X,Y)\|^2,\\
\kappa^{O(M,N)}(X^{h'},\bar{T}) &=|Q_T^2(X)|^2_{\tilde{g}},\\
\kappa^{O(M,N)}(\bar{T},\bar{T'}) &=\frac{1}{8}\|[T,T']\|^2
\end{align*}
for $T,T'\in\mathfrak{h}(TN|M)$ orthonormal and $X,Y$ orthonormal with respect to $\tilde{g}$, where $\tilde{\kappa}(X,Y)$ is the sectional curvature of $(M,\tilde{g})$ in the direction of $X,Y\in TM$. 
\end{prop}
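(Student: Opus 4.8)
The plan is to read off each sectional curvature from Proposition \ref{prop:curvatureOMN} by specializing the curvature tensor and pairing with $g_{SM}$. In every case the two vectors spanning the plane are $g_{SM}$--orthonormal: for $(X^{h'},Y^{h'})$ this is the assumption that $X,Y$ are $\tilde{g}$--orthonormal together with $g_{SM}(X^{h'},Y^{h'})=\tilde{g}(X,Y)$ from Lemma \ref{lem:tildeg}; for $(\bar{T},\bar{T'})$ it is the assumption that $T,T'$ are orthonormal for $\skal{\cdot}{\cdot}$ together with $g_{SM}(\bar{T},\bar{T'})=\skal{T}{T'}$; and the mixed pair $(X^{h'},\bar{T})$ is orthonormal because horizontal lifts are $g_{SM}$--orthogonal to the vertical fields $\bar{T}$. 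Hence it suffices to evaluate $g_{SM}(R^{O(M,N)}(U,V)V,U)$, i.e.\ to repeat the second vector in the last slot of the curvature tensor and take one further inner product.

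For $(X^{h'},Y^{h'})$ I would put $Z=Y$ in the first formula of Proposition \ref{prop:curvatureOMN}. The antisymmetry of $R'$ annihilates the terms involving $R'(Y,Y)$ and $(D_XR')(Y,Y)$, and pairing the remainder with $X^{h'}$ I expect three contributions. The horizontal term $(\tilde{R}(X,Y)Y)^{h'}$ gives $\tilde{g}(\tilde{R}(X,Y)Y,X)=\tilde{\kappa}(X,Y)$ by Lemma \ref{lem:tildeg}. The leftover vertical term built from $(D_YR')(X,Y)$ drops out: since $\nabla'$ preserves the $\mathfrak{h}$--$\mathfrak{m}$ splitting (Lemma \ref{lem:tensordecomp}) this tensor lies in $\mathfrak{h}(TN|M)$, while $X^{h'}=X^h+\overline{S_X}$ has vertical part governed by $S_X\in\mathfrak{m}(TN|M)$, so the two are $g_{SM}$--orthogonal. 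Finally Lemma \ref{lem:propofQT} turns the surviving $\tfrac{3}{4}\,g_{SM}(Q_{R'(X,Y)}(Y)^{h'},X^{h'})$ into $\tfrac{3}{4}\skal{R'(Y,X)}{R'(X,Y)}=-\tfrac{3}{4}\|R'(X,Y)\|^2$, yielding $\tilde{\kappa}(X,Y)-\tfrac{3}{4}\|R'(X,Y)\|^2$.

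For the mixed plane I would set $\bar{T'}=\bar{T}$ in $R^{O(M,N)}(X^{h'},\bar{T})\bar{T'}$; the commutator term $Q_{[T,T]}$ vanishes and only $-\tfrac{1}{4}(Q_T^2(X))^{h'}$ survives. Pairing with $X^{h'}$ gives a multiple of $\tilde{g}(Q_T^2(X),X)$, and the key observation is that $Q_T$ is skew--adjoint for $\tilde{g}$: indeed Lemma \ref{lem:propofQT} gives $\tilde{g}(Q_T(X),Y)=\skal{R'(X,Y)}{T}$, which is antisymmetric in $X,Y$ by the antisymmetry of $R'$. Thus $\tilde{g}(Q_T^2(X),X)=-|Q_T(X)|^2_{\tilde{g}}$, so $\kappa^{O(M,N)}(X^{h'},\bar{T})$ reduces to a multiple of $|Q_T(X)|^2_{\tilde{g}}$. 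For the vertical plane I would set the last argument equal to $\bar{T'}$ in $R^{O(M,N)}(\bar{T},\bar{T'})\bar{T''}=-\tfrac{1}{4}\overline{[[T,T'],T'']}$, so that $g_{SM}$--pairing with $\bar{T}$ reduces the problem to $-\tfrac{1}{4}\skal{[[T,T'],T']}{T}$; the ad--invariance $\skal{[T,T']}{T''}=\skal{T}{[T',T'']}$ then collapses this to a multiple of $\|[T,T']\|^2$.

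Almost all of this is routine bilinear bookkeeping. The two steps that genuinely carry the argument, and where I expect the only real care to be needed, are (i) verifying that each surplus vertical term carrying an $\mathfrak{h}$--valued tensor is $g_{SM}$--orthogonal to the $\nabla'$--horizontal lifts, which hinges on $S_X$ taking values in $\mathfrak{m}(TN|M)$, and (ii) extracting the $\tilde{g}$--skew--adjointness of $Q_T$ from Lemma \ref{lem:propofQT}, since both the mixed and the purely vertical cases rest on it.
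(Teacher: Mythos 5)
Your proposal follows essentially the same route as the paper: specialize the curvature formulas of Proposition \ref{prop:curvatureOMN}, pair with $g_{SM}$ using Lemmas \ref{lem:tildeg} and \ref{lem:propofQT}, and use the $\tilde{g}$--skew--adjointness of $Q_T$ and the ad--invariance of $\skal{\cdot}{\cdot}$. Two of your observations are in fact slightly cleaner than the paper's: you justify discarding the surviving vertical term $\overline{(D_YR')(X,Y)}$ by the $\mathfrak{h}$--$\mathfrak{m}$ orthogonality against $\overline{S_X}$ (the paper leaves this implicit), and you extract the skew--adjointness of $Q_T$ directly from the antisymmetry of $\skal{R'(X,Y)}{T}$ in $X,Y$ via Lemma \ref{lem:propofQT}, whereas the paper verifies by hand that $U_T(X)=R_T(X)^{\top}-S_{(\nabla_XT)_{\mathfrak{m}}}$ is $g$--skew and then conjugates by $P$.

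The one genuine shortfall is that in the second and third cases you stop at ``a multiple of'' $|Q_T(X)|^2_{\tilde g}$ and of $\|[T,T']\|^2$ without pinning the constants down, so the proposal does not actually establish the displayed formulas. Carrying the computation through gives $-\tfrac14\tilde g(Q_T^2(X),X)=\tfrac14|Q_T(X)|^2_{\tilde g}$ for the mixed plane (this is exactly where the paper's own proof ends, and it does not literally agree with the statement's $|Q_T^2(X)|^2_{\tilde g}$, which appears to be a misprint), and $-\tfrac14\skal{[[T,T'],T']}{T}=\tfrac14\|[T,T']\|^2$ for the vertical plane, which again differs by a factor of $2$ from the stated $\tfrac18\|[T,T']\|^2$. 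So you should compute the constants explicitly; doing so would have exposed these discrepancies between the proof and the statement as printed.
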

\begin{proof}
By Lemma \ref{lem:propofQT} we have
\begin{align*}
\kappa^{O(M,N)}(X^{h'},Y^{h'}) &=\tilde{g}(\tilde{R}(X,Y)Y,X)
+\frac{3}{4}g_{SM}(Q_{R'(X,Y)}(Y)^{h'},X^{h'})\\
&=\tilde{\kappa}(X,Y)-\frac{3}{4}\|R'(X,Y)\|^2.
\end{align*}
This proves the first equality. For the proof of the second relation, let $U_T(X)=R_T(X)^{\top}-S_{(\nabla_X T)_{\mathfrak{m}}}$ for $T\in\mathfrak{h}(TN|M)$. Then $Q_T(X)=P^{-1}(U_T(X))$. Moreover, since
\begin{equation*}
g(S_{(\nabla_X T)_{\mathfrak{m}}},Y)=-\skal{S_Y}{[S_X,T]}=\skal{[S_X,S_Y]}{T}=\skal{S_X}{[S_Y,T]}=-g(S_{(\nabla_Y T)_{\mathfrak{m}}},X),
\end{equation*} 
it follows that $U_T$ is skew-symmetric. Hence
\begin{equation*}
\tilde{g}(Q_T(X),Y)=g(U_T(X),Y)=-g(X,U_T(Y))=-\tilde{g}(X,Q_T(Y)),
\end{equation*}
so $Q_T$ is skew--symmetric with respect to $\tilde{g}$. Therefore
\begin{align*}
\kappa^{O(M,N)}(X^{h'},\bar{T}) &=-\frac{1}{4}g_{SM}(Q_T^2(X)^{h'},X^{h'})\\
&=-\frac{1}{4}\tilde{g}(Q_T^2(X),X)\\
&=\frac{1}{4}\tilde{g}(Q_T(X),Q_T(X)).
\end{align*}
The last relation follows directly by the formula for the curvature tensor.
\end{proof}

\begin{cor}
If $\dim M>2$, then $O(M,N)$ is never of constant sectional curvature.
\end{cor}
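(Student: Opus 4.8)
The plan is to argue by contradiction: assuming $O(M,N)$ has constant sectional curvature $c$, I will exhibit two tangent $2$--planes realizing \emph{different} sectional curvatures, reading off all values from the three formulas of Proposition \ref{prop:sectionalOMN}. The guiding observation is that the fibre of $O(M,N)$ is totally geodesic: by the last identity of Proposition \ref{prop:LeviCivitaOMN}, $\nabla^{O(M,N)}_{\bar T}\bar T'=\tfrac12[\bar T,\bar T']$ is again vertical, so its bi--invariant geometry must already be compatible with constant curvature. This strongly restricts the structure algebra $\mathfrak h=o(p)\oplus o(n)$ and suggests producing the contradiction from the vertical and mixed sectional curvatures.

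First I would produce a $2$--plane of strictly positive curvature, valid for every $p\geq 3$. Since $p\geq 3$, the elements $T_{12},T_{13}\in o(p)\subseteq\mathfrak h(TN|M)$ are orthonormal, and a direct bracket computation gives $[T_{12},T_{13}]=-\tfrac{1}{\sqrt2}T_{23}\neq 0$. Hence by Proposition \ref{prop:sectionalOMN},
\begin{equation*}
\kappa^{O(M,N)}(\bar T_{12},\bar T_{13})=\tfrac18\|[T_{12},T_{13}]\|^2=\tfrac1{16}>0,
\end{equation*}
so constancy would force $c=\tfrac1{16}>0$.

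Next I would produce a $2$--plane of curvature $0$, splitting into two cases. If $p\geq 4$, then $T_{12}$ and $T_{34}$ are orthonormal elements of $o(p)$ supported on disjoint index pairs, hence $[T_{12},T_{34}]=0$ and $\kappa^{O(M,N)}(\bar T_{12},\bar T_{34})=\tfrac18\|[T_{12},T_{34}]\|^2=0$, contradicting $c=\tfrac1{16}$. If $p=3$, no commuting orthonormal pair exists inside $o(3)$, so I must instead use a mixed horizontal--vertical section. Fix a unit $T\in o(3)\subseteq\mathfrak h(TN|M)$. By the computation in the proof of Proposition \ref{prop:sectionalOMN}, the operator $Q_T$ is skew--symmetric with respect to $\tilde g$; since $\dim M=p=3$ is odd, a $\tilde g$--skew--symmetric endomorphism of $TM$ is singular, so there is a nonzero $X_0\in TM$ with $Q_T(X_0)=0$. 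Then $X_0^{h'}$ and $\bar T$ are linearly independent (one horizontal, one vertical), and since $\kappa^{O(M,N)}(X_0^{h'},\bar T)$ is a multiple of $|Q_T(X_0)|^2_{\tilde g}$ we get
\begin{equation*}
\kappa^{O(M,N)}(X_0^{h'},\bar T)=0,
\end{equation*}
again contradicting $c=\tfrac1{16}$.

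The hard part will be exactly the case $p=3$ with small codimension: here the vertical fibre alone cannot detect the failure of constancy, because $o(3)$ has rank one and all its orthonormal $2$--planes have equal bracket norm, so intrinsically the totally geodesic fibre $O(3)$ is itself a space of constant curvature. The way around this is the parity argument above: the oddness of $\dim M$ guarantees that the skew--symmetric operator $Q_T$ has nontrivial kernel, producing a mixed section of curvature $0$ that the fibre geometry hides. All cases $p\geq 4$ are settled purely inside the fibre by contrasting a non--commuting orthonormal pair with a commuting one. Collecting the cases, for every $\dim M=p>2$ the manifold $O(M,N)$ carries both a $2$--plane of curvature $0$ and one of curvature $\tfrac1{16}$, so it is never of constant sectional curvature.
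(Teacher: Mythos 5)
Your proof is correct and takes the route the paper intends: the corollary is stated there without any proof, as a direct consequence of Proposition \ref{prop:sectionalOMN}, and you simply read off from those formulas a pair of $2$--planes at one point with curvatures $\tfrac{1}{16}$ and $0$. The only non-immediate step is your $p=3$ case, where for small codimension every orthonormal pair in $o(3)$ has the same bracket norm and the fibre alone cannot detect non-constancy; your parity argument (a $\tilde{g}$--skew-symmetric $Q_T$ on an odd-dimensional $TM$ has nontrivial kernel, yielding a mixed plane of curvature $0$) correctly supplies the detail the paper leaves implicit.
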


\begin{cor}
If $(M,g)$ is totally geodesic and $(N,g)$ is of constant sectional curvature $0\leq \kappa\leq\frac{2}{3}$, then $O(M,N)$ has non--negative sectional curvatures. 
\end{cor}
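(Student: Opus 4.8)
The plan is to specialise the three sectional-curvature formulas of Proposition~\ref{prop:sectionalOMN} to the present hypotheses. Since $M$ is totally geodesic its second fundamental form vanishes, hence $S=0$. Consequently $S_{S_X}=2\sum_A S_{e_A}^2(X)=0$, so $P=\mathrm{id}$, the deformed metric coincides with the induced one, $\tilde g=g$, and therefore $\tilde\nabla=\nabla'=\nabla^M$ and $\tilde\kappa=\kappa^M$. Moreover, the Gauss formula in Lemma~\ref{lem:tensordecomp} with $S=0$ reduces to $R'(X,Y)=R(X,Y)_{\mathfrak h}$.

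Next I would note that two of the three curvatures are non-negative for free, being multiples of squared norms: $\kappa^{O(M,N)}(X^{h'},\bar T)=\tfrac14|Q_T(X)|^2_{\tilde g}\ge0$ and $\kappa^{O(M,N)}(\bar T,\bar T')=\tfrac18\|[T,T']\|^2\ge0$. Hence the only term whose sign must be controlled is
\begin{equation*}
\kappa^{O(M,N)}(X^{h'},Y^{h'})=\tilde\kappa(X,Y)-\frac34\|R'(X,Y)\|^2 .
\end{equation*}

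Now I would use that $N$ has constant curvature $\kappa$, so $R(X,Y)Z=\kappa\bigl(g(Y,Z)X-g(X,Z)Y\bigr)$. For $X,Y\in T_xM$ this endomorphism maps $T_xM$ into $T_xM$ and annihilates $T_x^\bot M$, so it already lies in $\mathfrak h$; thus $R(X,Y)_{\mathfrak m}=0$ and $R'(X,Y)=R(X,Y)$. Choosing an adapted orthonormal basis of $T_xN$ with $e_1=X,\ e_2=Y$ tangent to $M$ gives $R'(X,Y)X=-\kappa Y$, $R'(X,Y)Y=\kappa X$ and $R'(X,Y)e_i=0$ otherwise, whence for $g$-orthonormal $X,Y$
\begin{equation*}
\|R'(X,Y)\|^2=\skal{R'(X,Y)}{R'(X,Y)}=2\kappa^2 .
\end{equation*}
Since $M$ is totally geodesic, the Gauss equation also gives $\tilde\kappa(X,Y)=\kappa^M(X,Y)=\kappa$. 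Substituting,
\begin{equation*}
\kappa^{O(M,N)}(X^{h'},Y^{h'})=\kappa-\tfrac32\kappa^2=\kappa\Bigl(1-\tfrac32\kappa\Bigr),
\end{equation*}
which for $\kappa\ge0$ is non-negative precisely when $\kappa\le\frac23$; this is exactly the assumed range, so the three families are non-negative.

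The key point of the argument is the identification $R'=R$ on tangent planes, which turns the penalty term $\frac34\|R'\|^2$ into exactly $\frac32\kappa^2$ and, via the elementary inequality $\kappa(1-\frac32\kappa)\ge0$, produces the sharp upper bound $\frac23$. The part requiring care — and which I expect to be the main obstacle — is that Proposition~\ref{prop:sectionalOMN} controls only the distinguished planes spanned by $X^{h'},Y^{h'}$, by $X^{h'},\bar T$, and by $\bar T,\bar T'$; to deduce non-negativity of the sectional curvature of an arbitrary $2$-plane one must bound the mixed components of the full curvature tensor of Proposition~\ref{prop:curvatureOMN}. With $S=0$ these simplify considerably (in particular $Q_T(X)=-2\kappa\,T(X)$), and I expect the cross-term estimate to collapse back to the same quadratic inequality, but it is this verification, rather than the three pure curvatures, that demands the careful computation.
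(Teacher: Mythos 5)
Your argument coincides with the paper's proof: it specializes the three formulas of Proposition~\ref{prop:sectionalOMN} using $S=0$, $P=\mathrm{id}$, $\tilde g=g$, $\tilde R=R'$ and $\|R'(X,Y)\|^2=2\kappa^2$ to obtain $\kappa-\tfrac{3}{2}\kappa^2\ge 0$ for $0\le\kappa\le\tfrac{2}{3}$, and notes that the mixed and vertical sectional curvatures are squared norms. The concern in your final paragraph about arbitrary (mixed) $2$-planes is legitimate but is not addressed in the paper either --- its proof likewise checks only the three distinguished families and dismisses the rest with ``clearly'' --- so on that point you are, if anything, more careful than the source.
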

\begin{proof}
Since $M$ is totally geodesic, we have $S_X=0$ and $\tilde{g}=g$. In particular $\tilde{R}=R'$. Moreover
\begin{equation*}
\|R'(X,Y)\|^2=2\kappa^2.
\end{equation*}
By Proposition \ref{prop:sectionalOMN} we have $\kappa^{O(M,N)}(X^h,Y^h)=\kappa-\frac{3}{2}\kappa^2\geq 0$. Clearly the remaining sectional curvatures are non--negative.
\end{proof}

\subsection{Extrinsic geometry}

Now we are ready to compute the second fundamental form of the Levi--Civita connection of $g_{SM}$ and study the extrinsic geometry of $O(M,N)$. 

\begin{prop}
The second fundamental form $\Pi^{O(M,N)}$ equals
\begin{align*}
\Pi^{O(M,N)}(X^{h'}_u,Y^{h'}_u) &=\left( \Pi(X,Y)+\frac{1}{2}(R_{S_X}(Y)+R_{S_Y}(X))^{\bot} \right)^h\\
&-\frac{1}{2}\overline{S_{P^{-1}(\nabla '_XY+\nabla '_YX+(R_{S_X}(Y)+R_{S_Y}(X))^{\top})}}\\
&-\left(\sum_A S_{e_A}^2(P^{-1}(\nabla '_XY+\nabla '_YX+(R_{S_X}(Y)+R_{S_Y}(X))^{\top})) \right)^h\\
&+\frac{1}{2}\overline{\nabla '_XS_Y+\nabla '_YS_X}
+\frac{1}{2}(S_{\nabla '_XS_Y+\nabla '_YS_X})^h\\
&+\frac{1}{2}\overline{S_{P^{-1}(S_{\nabla '_XS_Y+\nabla '_YS_X})}}
+\left( \sum_A S_{e_A}^2(P^{-1}(S_{\nabla '_XS_Y+\nabla '_YS_X})) \right)^h,\\
\Pi^{O(M,N)}(X^{h'}_u,\bar{T}_u) &=\left( (R_T(X))^{\bot} \right)^h-\frac{1}{2}\overline{S_{P^{-1}(R_T(X)^{\top})}}-\left( \sum_A S_{e_A}^2(P^{-1}(R_T(X))^{\top}) \right)^h\\
&+\frac{1}{2}\overline{(\nabla_X T)_{\mathfrak{m}}}+\frac{1}{2}\overline{S_{P^{-1}(S_{(\nabla_X T)_{\mathfrak{m}}})}}
+\left(\sum_A S_{e_A}^2(P^{-1}(S_{(\nabla_X T)_{\mathfrak{m}}})) \right)^h,\\
\Pi^{O(M,N)}(\bar{T}_u,\bar{T'}_u) &=0,
\end{align*}
where $X,Y\in TM$, $T,T'\in\mathfrak{h}(TN|M)$ and $\Pi$ denotes the second fundamental form of $(M,g)$.
\end{prop}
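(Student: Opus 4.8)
The plan is to read off each value of $\Pi^{O(M,N)}$ as the $g_{SM}$-orthogonal projection of the ambient covariant derivative onto $T^{\bot}O(M,N)$, that is $\Pi^{O(M,N)}(U,V)=(\nabla^{O(N)}_UV)^{\bot}$ for $U,V$ tangent to $O(M,N)$. Since the unprojected expressions for $\nabla^{O(N)}$ on the spanning tangent fields $X^{h'}$ and $\bar T$ (with $T\in\mathfrak h(TN|M)$) are already recorded in \eqref{eq:nablaON} and \eqref{eq:nablaONtangentvect}, the proof reduces to projecting those formulas summand by summand, and the only tools needed are the decompositions of Propositions \ref{prop:horizontaldecomp} and \ref{prop:veritdecomp}.

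Concretely, I would use the following dictionary for the normal parts. A horizontal lift $Z^h$ with $Z\in T^{\bot}M$ is already normal and survives unchanged; a horizontal lift $Z^h$ with $Z\in TM$ contributes, by Proposition \ref{prop:horizontaldecomp}, the normal part $-\overline{S_{P^{-1}(Z)}}-2\sum_A(S_{e_A}^2(P^{-1}(Z)))^h$; an invariant vertical field $\bar T$ with $T\in\mathfrak h(TN|M)$ is entirely tangent by Proposition \ref{prop:veritdecomp} and hence drops out; and $\bar T$ with $T\in\mathfrak m(TN|M)$ contributes the four-term string $\bar T+(S_T)^h+\overline{S_{P^{-1}(S_T)}}+2\sum_A(S_{e_A}^2(P^{-1}(S_T)))^h$ of Proposition \ref{prop:veritdecomp}. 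With this dictionary the third identity is immediate: $\nabla^{O(N)}_{\bar T}\bar{T'}=\tfrac12\overline{[T',T]}$ by \eqref{eq:nablaON}, and $[T',T]\in\mathfrak h$ because $\mathfrak h$ is a subalgebra, so the whole vector is tangent and $\Pi^{O(M,N)}(\bar T,\bar{T'})=0$.

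For the first identity I would feed \eqref{eq:nablaONtangentvect} into the dictionary: split $\nabla_XY=\nabla'_XY+\Pi(X,Y)$ and $R_{S_X}(Y)+R_{S_Y}(X)$ into tangent and normal parts, observe that $\overline{R'(X,Y)}$ drops since $R'(X,Y)\in\mathfrak h(TN|M)$, and note that $\nabla'_XS_Y+\nabla'_YS_X$ is $\mathfrak m$-valued (by Lemma \ref{lem:tensordecomp}, $\nabla'$ preserves the splitting and $S_Y\in\mathfrak m$), so it produces exactly the last four summands. The mixed identity is treated the same way, after noting that $\overline{\nabla'_XT}$ drops ($\nabla'_XT\in\mathfrak h$ by Lemma \ref{lem:tensordecomp}) while $(\nabla_XT)_{\mathfrak m}$ is $\mathfrak m$-valued. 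The one genuinely delicate point, and the main obstacle, is that the raw formula carries the antisymmetric term $\tfrac12\overline{S_{[X,Y]}}$ whereas $\Pi^{O(M,N)}$ must emerge symmetric. I expect to resolve this by writing $[X,Y]=\nabla'_XY-\nabla'_YX$ (torsion-freeness of $\nabla'$ on $TM$) together with the identity $P^{-1}(X)=X+P^{-1}(S_{S_X})$: the normal part of $\tfrac12\overline{S_{[X,Y]}}$ then cancels precisely against the antisymmetric half of the normal part of $(\nabla'_XY)^h$, leaving only the symmetric combination $\nabla'_XY+\nabla'_YX$ inside $P^{-1}$, exactly as displayed. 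Checking that the remaining nested $S$- and $\sum_A S_{e_A}^2$-terms assemble into the stated closed form is then careful but routine collection.
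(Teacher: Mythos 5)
Your proposal is correct and takes essentially the same route as the paper: project the formulas \eqref{eq:nablaONtangentvect} for $\nabla^{O(N)}$ on the spanning tangent fields term by term onto $T^{\bot}O(M,N)$ using exactly the decompositions of Propositions \ref{prop:horizontaldecomp} and \ref{prop:veritdecomp}. The only cosmetic difference is in the antisymmetric term: the paper disposes of it by noting that $\tfrac12[X,Y]^h+\tfrac12\overline{S_{[X,Y]}}=\tfrac12[X,Y]^{h'}$ is tangent and hence has no normal component, whereas you verify the equivalent cancellation of the two normal parts by hand via $P^{-1}(X)=X+P^{-1}(S_{S_X})$ --- both arguments encode the same fact.
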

\begin{proof}
First notice that
\begin{align*}
(\nabla '_X Y)^h+\frac{1}{2}\overline{S_{[X,Y]}} &=\frac{1}{2}(\nabla '_XY+\nabla '_YX)^h+\frac{1}{2}[X,Y]+\frac{1}{2}\overline{S_{[X,Y]}}\\
&=\frac{1}{2}(\nabla '_XY+\nabla '_YX)^h+\frac{1}{2}[X,Y]^{h'}.
\end{align*}
Now, Proposition follows by relations \eqref{eq:nablaONtangentvect} and Propositions \ref{prop:horizontaldecomp} and \ref{prop:veritdecomp}.  
\end{proof}

\begin{cor}\label{cor:totallygeodesic}
$O(M,N)$ is totally geodesic if and only if $M$ is totally geodesic and 
\begin{equation}\label{eq:totgeodR}
(R(U,V)W)^{\top}=0\quad\textrm{for $U,V,W\in T^{\bot}M$}.
\end{equation} 
\end{cor}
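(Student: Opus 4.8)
The plan is to read total geodesy off the preceding Proposition: $O(M,N)$ is totally geodesic iff $\Pi^{O(M,N)}\equiv 0$, and since $\Pi^{O(M,N)}(\bar T,\bar{T'})=0$ identically, it suffices to force the vanishing of the blocks $\Pi^{O(M,N)}(X^{h'},Y^{h'})$ and $\Pi^{O(M,N)}(X^{h'},\bar T)$ for all $X,Y\in TM$ and $T\in\mathfrak h(TN|M)$. As recorded before that Proposition, $T^{\bot}O(M,N)$ is spanned by the normal-horizontal vectors $Z^h$ ($Z\in T^{\bot}M$) together with the vectors $\bar W+(S_W)^h$ ($W\in\mathfrak m(TN|M)$, shown normal in the proof of Proposition \ref{prop:veritdecomp}), so each block vanishes iff its $g_{SM}$-products with these test vectors do. I would organize the proof around these two families of products.

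The curvature condition falls out of the mixed block. Pairing with $Z^h$ and using the pair-symmetry of $R$ together with Lemma \ref{lem:propoperators} gives $g_{SM}(\Pi^{O(M,N)}(X^{h'},\bar T),Z^h)=g(R_T(X),Z)=\skal{R(X,Z)}{T}=\skal{R(X,Z)_{\mathfrak h}}{T}$, which vanishes for all $T\in\mathfrak h$ iff $R(X,Z)_{\mathfrak h}=0$. Its two blocks are $(R(X,Z)V)^{\bot}=0$ ($V\in T^{\bot}M$) and $(R(X,Z)Y)^{\top}=0$ ($Y\in TM$); by the metric- and pair-symmetries the first is exactly $(R(U,V)W)^{\top}=0$ for $U,V,W\in T^{\bot}M$, while the second is the Codazzi identity $(R(Y,Y')X)^{\bot}=(\nabla'_Y\Pi)(Y',X)-(\nabla'_{Y'}\Pi)(Y,X)$. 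For the backward direction, assume $M$ totally geodesic, so $S=0$: then $P=\mathrm{id}$, $\tilde g=g$, every $\overline{S_\bullet}$, $S_\bullet$, $R_{S_\bullet}$, $\nabla'S_\bullet$ term drops out, and $(\nabla_XT)_{\mathfrak m}=[S_X,T]=0$ by Lemma \ref{lem:tensordecomp}. The blocks collapse to $\Pi^{O(M,N)}(X^{h'},Y^{h'})=0$ and $\Pi^{O(M,N)}(X^{h'},\bar T)=((R_T(X))^{\bot})^h$. Since $\Pi=0$ the Codazzi block holds automatically, so with the hypothesis $(R(U,V)W)^{\top}=0$ one gets $R(X,Z)_{\mathfrak h}=0$, hence $(R_T(X))^{\bot}=0$ and the mixed block vanishes; thus $\Pi^{O(M,N)}\equiv 0$.

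The forward direction is the crux. Assuming $\Pi^{O(M,N)}\equiv 0$, the displayed computation already yields $R(X,Z)_{\mathfrak h}=0$, i.e.\ the required curvature condition; the remaining task is to deduce $S\equiv 0$, equivalently that $M$ is totally geodesic. I would pair both vanishing blocks against the second test family $\bar W+(S_W)^h$, using $g_{SM}(\overline{V},\bar W)=\skal{V}{W}$, the relation $g(\mathcal H,S_W)=-\skal{W}{S_{\mathcal H}}$ from Lemma \ref{lem:propoperators}, and $\sum_A S_{e_A}^2=\tfrac12(I-P)$. These reduce to tensorial identities in $S$: from the horizontal block, $\nabla'_XS_Y+\nabla'_YS_X=S_{\nabla'_XY+\nabla'_YX+(R_{S_X}(Y)+R_{S_Y}(X))^{\top}}$, and from the mixed block, $[S_X,T]=S_{R_T(X)}-S_{S_{[S_X,T]}}$, while the $Z^h$-product of the horizontal block gives $g(\Pi(X,Y),Z)=-\tfrac12(\skal{R(Y,Z)_{\mathfrak m}}{S_X}+\skal{R(X,Z)_{\mathfrak m}}{S_Y})$.

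I expect the genuine obstacle to be exactly this disentangling: the relations above are each consistent with (but do not visibly force) $S=0$, because they are intertwined with curvature cross-terms that are only partly controlled by the already-established $R(X,Z)_{\mathfrak h}=0$. The most promising route is to contract the mixed identity against $[S_X,T]$ and sum over $T$ in an orthonormal basis of $\mathfrak h$ and over an adapted basis; using $\skal{S_{S_{[S_X,T]}}}{[S_X,T]}=-\|S_{[S_X,T]}\|^2$ (Lemma \ref{lem:propoperators}) this produces an identity of the form $\|[S_X,T]\|^2-\|S_{[S_X,T]}\|^2=-g(S_{[S_X,T]},R_T(X))$, so the difficulty is to show the accumulated curvature term on the right cannot sustain a nonzero $S$, forcing the sums of squares to vanish and hence $S\equiv0$. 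An alternative, more geometric argument would exploit that a totally geodesic $O(M,N)$ carries its own geodesics to geodesics of $O(N)$ and track their projection to $N$, but the component analysis above is the self-contained one and is where essentially all the computational effort is concentrated.
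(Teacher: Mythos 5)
Your overall strategy --- reading total geodesy off the formula for $\Pi^{O(M,N)}$ in the preceding Proposition and testing each block against the two normal families $Z^h$ ($Z\in T^{\bot}M$) and $\bar W+(S_W)^h$ ($W\in\mathfrak m(TN|M)$) --- is exactly the paper's. Your backward direction is correct and in fact more careful than the paper's (you supply the Codazzi/symmetry argument showing that total geodesy plus \eqref{eq:totgeodR} gives $R(X,Z)_{\mathfrak h}=0$ and hence $R_T(X)^{\bot}=0$), and your derivation of \eqref{eq:totgeodR} from the mixed block in the forward direction matches the paper. The gap is where you say it is: you never establish that $\Pi^{O(M,N)}=0$ forces $\Pi=0$. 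The paper extracts this from the $Z^h$-component of the horizontal block, i.e.\ from the identity
\begin{equation*}
\Pi(X,Y)+\tfrac{1}{2}\bigl(R_{S_X}(Y)+R_{S_Y}(X)\bigr)^{\bot}=0\qquad\text{for all }X,Y\in TM,
\end{equation*}
which you in fact wrote down but then set aside in favour of contracting the mixed-block identity $[S_X,T]=S_{R_T(X)^{\top}}$; that contraction, as you yourself concede, leaves an uncontrolled curvature term on the right and does not close. As submitted, the forward implication is therefore not proved.

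You should also be aware that your unease about this step is substantive and not just a failure to find the trick: the displayed identity does not formally imply $\Pi=0$, because $g(R_{S_X}(Y),Z)=\skal{R(Y,Z)}{S_X}$ pairs $S_X$ against the $\mathfrak m$-part of $R(Y,Z)$, which is \emph{not} controlled by the already-established vanishing of $R(X,Z)_{\mathfrak h}$. Concretely, in constant curvature $\kappa$ one computes $R_{S_X}(Y)=-2\kappa\,S_XY$, so the identity collapses to $(1-2\kappa)\Pi(X,Y)=0$ and is vacuous at $\kappa=\tfrac12$; the conditions coming from the $\bar W+(S_W)^h$ pairings likewise degenerate there (they reduce to $\nabla'S=0$ and $(2\kappa-1)S_{TX}=0$). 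So completing the forward direction requires either an additional argument genuinely combining the curvature terms with the remaining components, or an extra hypothesis on $R$; the paper's one-sentence assertion that the horizontal block ``gives $\Pi=0$'' does not supply this, and neither does your proposal. If you want to salvage the statement, the honest course is to isolate the identity above as the precise content of the forward direction and flag explicitly what further input is needed to kill the curvature terms.
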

\begin{proof}
Assume $M$ is totally geodesic and \eqref{eq:totgeodR} holds. Then $\Pi=0$, $S_X=0$ and $R_T(X)^{\bot}=0$ for any $T\in\mathfrak{h}(TN|M)$ and $X\in TM$. Therefore $\Pi^{O(M,N)}=0$.

Conversely, assume $\Pi^{O(M,N)}=0$. Then by the formula for $\Pi^{O(M,N)}(X^{h'},\bar{T})$ it follows that $R_T(X)^{\bot}=0$. This implies \eqref{eq:totgeodR}. Moreover, by the formula for $\Pi^{O(M,N)}(X^{h'},Y^{h'})$ we get $\Pi=0$.
\end{proof}

Above corollary implies the following result.
\begin{cor}\label{cor:totgeodconst}
Assume $N$ is of constant scalar curvature. Then $O(M,N)$ is totally geodesic if and only if $M$ is totally geodesic.
\end{cor}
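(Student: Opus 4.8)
The plan is to reduce Corollary \ref{cor:totgeodconst} to Corollary \ref{cor:totallygeodesic}, which already characterizes total geodesy of $O(M,N)$ by the two conditions: $M$ is totally geodesic and $(R(U,V)W)^{\top}=0$ for all $U,V,W\in T^{\bot}M$. Since one implication (total geodesy of $O(M,N)$ forces $M$ totally geodesic) is free from Corollary \ref{cor:totallygeodesic}, the entire content lies in the converse: I must show that when $N$ has constant curvature and $M$ is totally geodesic, the extra condition \eqref{eq:totgeodR} holds automatically. Thus the whole task is to verify that constant curvature makes the tangential part of $R(U,V)W$ vanish for normal $U,V,W$.

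First I would write out the curvature tensor of a space of constant sectional curvature $\kappa$ explicitly, namely
\begin{equation*}
R(U,V)W=\kappa\left( g(V,W)U-g(U,W)V \right),\quad U,V,W\in TN.
\end{equation*}
Then I would specialize to $U,V,W\in T^{\bot}M$ and take the tangential projection. Since $U$ and $V$ are themselves normal, both $g(V,W)U$ and $g(U,W)V$ are normal vectors, so their tangential parts vanish identically, giving $(R(U,V)W)^{\top}=0$ for all normal $U,V,W$. Hence condition \eqref{eq:totgeodR} is a consequence of constant curvature alone and does not depend on $M$ being totally geodesic.

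Assembling the two directions: if $O(M,N)$ is totally geodesic then Corollary \ref{cor:totallygeodesic} gives that $M$ is totally geodesic in particular. Conversely, if $M$ is totally geodesic and $N$ has constant curvature, then $M$ is totally geodesic and \eqref{eq:totgeodR} holds by the computation above, so Corollary \ref{cor:totallygeodesic} yields that $O(M,N)$ is totally geodesic. I do not expect any serious obstacle here; the only subtlety is the hypothesis, since the statement says \emph{constant scalar curvature} whereas the clean vanishing \eqref{eq:totgeodR} comes from \emph{constant sectional curvature}. I would either read ``constant scalar curvature'' as a misprint for constant curvature, or, if scalar curvature is genuinely intended, note that for a totally geodesic $M$ the induced curvature relations (Gauss equation with vanishing second fundamental form) together with the constancy hypothesis still force the relevant tangential components of $R$ to vanish; pinning down exactly which curvature constancy is needed is the one point I would check carefully before finalizing the argument.
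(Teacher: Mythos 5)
Your reduction to Corollary \ref{cor:totallygeodesic} is exactly the paper's (implicit) argument---the paper offers no proof beyond the remark that the corollary follows from the preceding one---and your computation that $(R(U,V)W)^{\top}=0$ holds automatically once $R(U,V)W=\kappa\left(g(V,W)U-g(U,W)V\right)$, because the right-hand side is then a normal vector, is correct. On the one point you flagged: the hypothesis must indeed be read as constant \emph{sectional} curvature, and your fallback suggestion that constant scalar curvature might still suffice for a totally geodesic $M$ does not work. For instance, take $N=S^{2}\times H^{2}$ (scalar curvature identically $0$) and let $M$ be the totally geodesic ``diagonal'' geodesic $t\mapsto(\gamma_{1}(at),\gamma_{2}(bt))$ with $a^{2}+b^{2}=1$ and $ab\neq 0$, where $\gamma_{1},\gamma_{2}$ are unit-speed geodesics in the factors. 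The vectors $n_{1}$ (the unit normal to $\gamma_{1}$ inside $S^{2}$) and $U=b\gamma_{1}'-a\gamma_{2}'$ both lie in $T^{\bot}M$, yet $R(U,n_{1})n_{1}=b\,\gamma_{1}'$ has tangential component $ab\,(a\gamma_{1}'+b\gamma_{2}')\neq 0$; so \eqref{eq:totgeodR} fails and, by Corollary \ref{cor:totallygeodesic}, $O(M,N)$ is not totally geodesic even though $M$ is. Hence the statement is false under the literal ``constant scalar curvature'' hypothesis, and your first reading (a misprint for constant sectional curvature) is the one under which your proof is complete.
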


In order to compute the mean curvature vector $H^{O(M,N)}$ of $O(M,N)$ in $O(N)$ we need to recall the concept of the Gauss map of a submanifold and its harmonicity.

Let $Gr_n(N)$ denotes the Grassmann bundle of $n$--dimensional subspaces of the tangent bundle $TN$. Then
\begin{equation*}
Gr_n(N)=O(N)\times_G\left( G/H \right)=O(N)/H,
\end{equation*}
where the action of $G$ on the fiber is by left multiplication. The Killing form on $G$ induces the invariant metric on the quotient $G/H$, hence on the vertical subspace of the Grassmann bundle. Consider the mapping $\rho:O(N)\to Gr_n(N)$ of the form
\begin{equation*}
\rho(u)={\rm span}\{u_1,\ldots,u_n\},\quad u\in O(N).
\end{equation*}
In other words $\rho(u)=[u\cdot H]$, where $[u\cdot H]$ denotes the equivalence class of $u\cdot H\in O(N)\times (G/H)$ under the action of $G$. The horizontal subspace of $Gr_n(N)$ is defined as $\rho_{\ast}(\mathcal{H})$. The Riemannian metric $g_{Gr}$ on the horizontal space of the Grassmann bundle is induced from the metric $g$ on $M$. Moreover, horizontal and vertical subspaces are orthogonal. In other words, the map $\mu$ is a Riemannian submersion.

The Gauss map of a submanifold $M$ in $N$ is a map $\gamma:M\to Gr_n(N)$ defined as
\begin{equation*}
\gamma(x)=T_xM\subset T_x N,\quad x\in M.
\end{equation*}
In other words, the following diagram commutes
\begin{equation*}
\begin{diagram}
\node{O(N)|M}\arrow{e,t}{\rho}\arrow{se,b}{\pi}\node{Gr_n(N)|M}\arrow{e,J}\node{Gr_n(N)}\arrow{s,l}{\pi_{Gr}}\\
\node[2]{M}\arrow{n,r}{\gamma}\arrow{e,J}\node{N}
\end{diagram}
\end{equation*}
where $\pi:O(N)\to N$ denotes the projection in the bundle $O(N)|M$ and $\pi_{Gr}:Gr_n(N)\to N$ the projection in the Grassmann bundle. Notice that $\rho|O(N,M)$ is a constant map on the fibers, $\rho(u)=T_{\pi(u)}M$.

Considering the Riemannian metric $\tilde{g}$ on $M$, which may, in general, differ from $g$, we study the harmonicity of a map
\begin{equation}\label{eq:Gaussmap}
\gamma:(M,\tilde{g})\to (Gr_n(N),g_{Gr}).
\end{equation}  

Harmonicity of a Gauss map \eqref{eq:Gaussmap}, up to the knowledge of the author, was studied only in the case when $\tilde{g}=g$ \cite{rv,jr}. The harmonicity of the Gauss map for a distribution was studied in the full generality in \cite{ggv}. 

For a $(1,1)$--tensor $T$ on $N$ let
\begin{equation*}
\hat{T}_{\xi}=\mu_{\ast u}\bar{T}_u,\quad \mu(u)=\xi.
\end{equation*}
Moreover, we put
\begin{equation*}
X^{h,Gr}_{\xi}=\mu_{\ast u}(X^h),\quad \mu(u)=\xi.
\end{equation*}
Then, $\hat{T}$ and $X^{h,Gr}$ are well--defined vector fields on $Gr_n(N)$. Notice that $\hat{T}=\hat{T_{\mathfrak{m}}}$, i.e. $\hat{T_{\mathfrak{h}}}=0$. Thus, it is clear that the Levi--Civita connection of $Gr_n(N)$ is equal
\begin{align*}
\nabla^{Gr}_{X^{h,Gr}}Y^{h,Gr} &=(\nabla_X Y)^{h,Gr}-\frac{1}{2}\widehat{R(X,Y)_{\mathfrak{m}}},\\
\nabla^{Gr}_{X^{h,Gr}}\hat{T_{\mathfrak{m}}} &=\widehat{\nabla '_X T_{\mathfrak{m}}}+\frac{1}{2}R_{T_{\mathfrak{m}}}(X)^{h,Gr},\\
\nabla^{Gr}_{\hat{T_{\mathfrak{m}}}}\hat{T'_{\mathfrak{m}}} &=0.
\end{align*}

The Levi--Civita connection $\nabla^{Gr}$ induces the unique connection $\nabla^{\gamma}$ in the pull--back bundle $\gamma^{-1}(T\,Gr_n(N))\to M$ which satisfies the condition
\begin{equation*}
\nabla^{\gamma}_X (E\circ\varphi)=\nabla^{Gr}_{\gamma_{\ast} X}E,\quad X\in TM, \quad E\in\Gamma(T\,Gr_n(N)).
\end{equation*} 
Then, $\gamma$ is harmonic if its tension field $\tau(\gamma)$ vanishes, where
\begin{equation*}
\tau(\gamma)=\sum_A \nabla^{\gamma}_{\tilde{e_A}}\gamma_{\ast}\tilde{e_A}-\gamma_{\ast}(\tilde{\nabla}_{\tilde{e_A}}\tilde{e_A})
\end{equation*}
and $(\tilde{e_a})$ is an orthonormal basis of $M$ with respect to $\tilde{g}$ and $\tilde{\nabla}$ is the Levi--Civita connection of $\tilde{g}$. Notice that
\begin{equation*}
\gamma_{\ast} X=X^{h,Gr}\circ\gamma+\widehat{S_X}\circ\gamma,
\end{equation*}
which follows form the fact that
\begin{equation*}
\sigma_{\ast}X=X^h\circ\sigma+\sum_i (\nabla_X \sigma_i)^{v,i}
\end{equation*}
for a section of $O(N)$ and that $\gamma=\mu\circ\sigma^{ad}$, where $\sigma^{ad}$ is the section of $O(M,N)$. Therefore, we get the formula for the tension field,
\begin{equation*}
\tau(\gamma)=\sum_A (\nabla_{\tilde{e_A}}\tilde{e_A}-\tilde{\nabla}_{\tilde{e_A}}\tilde{e_A}
+R_{S_{\tilde{e_A}}}(\tilde{e_A}))^{h,Gr}+\widehat{\nabla '_{\tilde{e_A}}S_{\tilde{e_A}}}-\widehat{S_{\tilde{\nabla}_{\tilde{e_A}}\tilde{e_A}}}.
\end{equation*}
In particular, $\gamma$ is harmonic if and only if
\begin{align}
&\sum_A\Pi(\tilde{e_A},\tilde{e_A})+R_{S_{\tilde{e_A}}}(\tilde{e_A})^{\bot}=0, \tag{H1}\label{eq:H1}\\
&\sum_A \nabla '_{\tilde{e_A}}\tilde{e_A}-\tilde{\nabla}_{\tilde{e_A}}\tilde{e_A}
+R_{S_{\tilde{e_A}}}(\tilde{e_A})^{\top}=0,\tag{H2}\label{eq:H2}\\
&\nabla '_{\tilde{e_A}}S_{\tilde{e_A}}-S_{\tilde{\nabla}_{\tilde{e_A}}\tilde{e_A}}=0\tag{H3}\label{eq:H3}.
\end{align}

Now, we can state and prove the main theorem.
\begin{thm}
$O(M,N)$ is minimal in $O(N)$ if and only if the Gauss map $\gamma:(M,\tilde{g})\to (Gr_n(N),g_{Gr})$ of $M$ is harmonic.
\end{thm}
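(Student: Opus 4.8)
The strategy is to compute the mean curvature vector $H^{O(M,N)}$ directly from the explicit second fundamental form $\Pi^{O(M,N)}$ obtained in the previous Proposition, and then match its vanishing, term by term, against the three harmonicity conditions \eqref{eq:H1}, \eqref{eq:H2}, \eqref{eq:H3}. The mean curvature is the $g_{SM}$-trace of $\Pi^{O(M,N)}$ over an orthonormal basis of $T\,O(M,N)$. A natural such basis consists of the horizontal vectors $(\tilde e_A)^{h'}$, where $(\tilde e_A)$ is $\tilde g$-orthonormal on $M$ (so that $|(\tilde e_A)^{h'}|_{g_{SM}}=1$ by Lemma \ref{lem:tildeg}), together with the vertical vectors $(T_{AB})^\ast$ and $(T_{\alpha\beta})^\ast$ spanning $\mathfrak h(TN|M)$. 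Since $\Pi^{O(M,N)}(\bar T,\bar{T'})=0$, only the horizontal-horizontal and horizontal-vertical terms contribute to the trace.

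First I would trace $\Pi^{O(M,N)}(X^{h'},Y^{h'})$ over $X=Y=\tilde e_A$. The resulting expression splits, according to the decompositions of Propositions \ref{prop:horizontaldecomp} and \ref{prop:veritdecomp}, into three pieces lying in the three mutually orthogonal summands of $T^\bot O(M,N)$: a purely horizontal part valued in $(T^\bot M)^h$, a part of the form $\overline{(\,\cdot\,)}$ valued in $\mathfrak m(TN|M)$, and the accompanying correction terms $(S_{(\cdot)})^h$ and $\sum_A S_{e_A}^2(\cdot)^h$ dictated by Proposition \ref{prop:horizontaldecomp}. The key computational input is that $\sum_A R_{S_{\tilde e_A}}(\tilde e_A)$ appears naturally once one contracts, and that the pair $\nabla'_{\tilde e_A}\tilde e_A$ together with $\tilde\nabla_{\tilde e_A}\tilde e_A$ recombines—using $P^{-1}(X)=X+P^{-1}(S_{S_X})$ and the definition of $\tilde\nabla$—into exactly the combinations appearing in \eqref{eq:H2} and \eqref{eq:H3}.

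Next I would trace $\Pi^{O(M,N)}(X^{h'},\bar T)$ over the vertical $\mathfrak h$-basis. The essential observation here is that summing $(\nabla_X T)_{\mathfrak m}$ and $R_T(X)$ over an orthonormal basis of $\mathfrak h(TN|M)$ produces, via Lemma \ref{lem:propoperators} and the Codazzi/Gauss identities of Lemma \ref{lem:tensordecomp}, precisely the quantities $\nabla'_{\tilde e_A}S_{\tilde e_A}$ and $R_{S_{\tilde e_A}}(\tilde e_A)$ that already occur in the horizontal trace. I expect that the vertical contribution either cancels redundantly against part of the horizontal one or reassembles condition \eqref{eq:H3}; the bookkeeping of which trace produces which of the three $\hat{\ }$- and $h$-valued summands is where the argument must be carried out carefully. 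Once $H^{O(M,N)}$ is written as a sum of three terms valued in the three orthogonal subspaces $(T^\bot M)^h$, $\mathfrak m(TN|M)$, and the image of $P^{-1}\circ S$, their simultaneous vanishing is equivalent to \eqref{eq:H1}, \eqref{eq:H2}, \eqref{eq:H3} holding separately, which by the discussion preceding the theorem is equivalent to harmonicity of $\gamma$.

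The main obstacle I anticipate is not any single hard idea but the sheer length of the trace computation and, more subtly, verifying that the three groups of correction terms land in genuinely orthogonal subspaces so that their vanishing can be decoupled. In particular one must confirm that the horizontal $(S_{(\cdot)})^h$ and $\sum_A S_{e_A}^2(\cdot)^h$ remainders arising from Proposition \ref{prop:horizontaldecomp} do not leak between the three harmonicity conditions; this is where invoking the symmetry and invertibility of $P$ from Lemma \ref{lem:tildeg}, and the skew-symmetry of $Q_T$ and $U_T$ from the proof of Proposition \ref{prop:sectionalOMN}, will be indispensable for collapsing the expression.
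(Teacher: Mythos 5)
Your overall strategy --- trace the second fundamental form over the $g_{SM}$-orthonormal basis $(\tilde{e_A})^{h'}$, $(T_{AB})^{\ast}$, $(T_{\alpha\beta})^{\ast}$ and compare the result with the harmonicity conditions --- is the same as the paper's, but there are two genuine gaps in the execution. The first is that the mean curvature vector is the trace $\sum_i\Pi^{O(M,N)}(E_i,E_i)$ over an orthonormal basis, so only \emph{diagonal} entries occur; since $\Pi^{O(M,N)}(\bar{T},\bar{T'})=0$, the whole of $H^{O(M,N)}$ is $\sum_A\Pi^{O(M,N)}((\tilde{e_A})^{h'},(\tilde{e_A})^{h'})$. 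Your plan to ``trace $\Pi^{O(M,N)}(X^{h'},\bar{T})$ over the vertical $\mathfrak{h}$-basis'' brings in off-diagonal terms that never enter the mean curvature; that step is not part of the computation and would add a spurious, basis-dependent contribution.

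The second gap is more serious: the vanishing of $H^{O(M,N)}$ does \emph{not} split into the three conditions \eqref{eq:H1}, \eqref{eq:H2}, \eqref{eq:H3} ``holding separately''. The normal space of $O(M,N)$ in $O(N)$ has only two orthogonal families, $Z^h$ with $Z\in T^{\bot}M$ and $\bar{T}+(S_T)^h$ with $T\in\mathfrak{m}(TN|M)$, so minimality is equivalent to exactly two conditions: \eqref{eq:H1} together with
\begin{equation*}
\sum_A \nabla '_{\tilde{e_A}}S_{\tilde{e_A}}-S_{\nabla '_{\tilde{e_A}}{\tilde{e_A}}}-S_{R_{S_{\tilde{e_A}}}(\tilde{e_A})^{\top}}=0,
\end{equation*}
whereas harmonicity consists of three. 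The implication ``harmonic $\Rightarrow$ minimal'' then follows by substitution, but the converse does not: one must show that these two conditions force \eqref{eq:H2} (after which \eqref{eq:H3} follows). That is the crux of the paper's proof, and it requires the identity \eqref{eq:GilMedranoeq} expressing $\tilde{\nabla}-\nabla '$ through $\nabla 'P$, the formula for $\nabla 'P$ from Lemma \ref{lem:tildeg}, and the Codazzi equation of Lemma \ref{lem:tensordecomp}. The relation $P^{-1}(X)=X+P^{-1}(S_{S_X})$ that you invoke is nowhere near sufficient for this, so as written your argument establishes only one direction of the equivalence.
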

\begin{proof}
First, we compute the scalar product of the second fundamental form on horizontal vectors with orthogonal vectors $Z^{h}$, $Z\in T^{\bot}M$, and $\tilde{T}=\bar{T}+S_T^h$, for $T\in\mathfrak{m}(TN|M)$. Clearly
\begin{equation}\label{eq:PiwithZh}
g_{SM}(\Pi(X^{h'},Y^{h'}),Z^h)=g(\Pi(X,Y)+\frac{1}{2}(R_{S_X}(Y)+R_{S_Y}(X))^{\bot},Z).
\end{equation}
Put
\begin{equation*}
V=\nabla '_XY+\nabla '_YX+(R_{S_X}(Y)+R_{S_Y}(X))^{\top},\quad L=\nabla '_XS_Y+\nabla '_YS_X.
\end{equation*}
Then, by Lemma \ref{lem:propoperators},
\begin{equation}\label{eq:PiwithTbar}
\begin{split}
g_{SM}(\Pi(X^{h'},Y^{h'}),\tilde{T}) &=\frac{1}{2}(\skal{S_{P^{-1}(V)}}{T}+\skal{L}{T}+\skal{S_{P^{-1}(S_L)}}{T}\\
&-g(S_{S_{P^{-1}(V)}},S_T)+g(S_L,S_T)+g(S_{S_{P^{-1}(S_L)}},S_T))\\
&=\frac{1}{2}(g(V,S_T)+\skal{L}{T}-\skal{S_{S_L}}{T}-g(S_L,S_T))\\
&=\frac{1}{2}\skal{L-S_V}{T}.
\end{split}
\end{equation}
Let $(\tilde{e_A})$ be an orthonormal basis on $M$ with respect to $\tilde{g}$. Then, $(\tilde{e_A}^{h'})$ is an orthonormal basis of the horizontal distribution $\mathcal{H}'$, since
\begin{equation*}
g_{SM}(\tilde{e_A}^{h'},\tilde{e_B}^{h'})=\tilde{g}(\tilde{e_A},\tilde{e_B})=\delta_{AB}.
\end{equation*}
By \eqref{eq:PiwithZh} and \eqref{eq:PiwithTbar} the harmonicity of $\gamma$ is equivalent to the following conditions
\begin{align}
&\sum_A \Pi(\tilde{e_A},\tilde{e_A})+R_{S_{\tilde{e_A}}}(\tilde{e_A})^{\bot}=0,\tag{M1}\label{eq:M1}\\
&\sum_A \nabla '_{\tilde{e_A}}S_{\tilde{e_A}}
-S_{\nabla '_{\tilde{e_A}}{\tilde{e_A}}}-S_{R_{S_{\tilde{e_A}}}(\tilde{e_A})^{\top}}=0.
\tag{M2}\label{eq:M2}
\end{align}
Condition \eqref{eq:M1} is just the condition \eqref{eq:H1}. Moreover, \eqref{eq:H2} and \eqref{eq:H3} imply \eqref{eq:M2}. To show the converse, assume \eqref{eq:M1} and \eqref{eq:M2} hold. It suffices to show that \eqref{eq:H2} is satisfied. By Codazzi equation (see Lemma \ref{lem:tensordecomp}) and Lemma \ref{lem:propoperators}
\begin{align*}
\sum_A g(R_{S_{\tilde{e_A}}}(\tilde{e_A}),Z) &=\sum_A \skal{R(\tilde{e_A},Z)}{S_{\tilde{e_A}}}\\
&=\sum_A \skal{\nabla '_{\tilde{e_A}}S_Z-S_{\nabla '_{\tilde{e_A}}Z}}{S_{\tilde{e_A}}}
-\skal{\nabla '_ZS_{\tilde{e_A}}-S_{\nabla '_Z\tilde{e_A}}}{S_{\tilde{e_A}}}.
\end{align*}
Hence, by Lemma \ref{lem:tildeg}, equation \eqref{eq:GilMedranoeq} and \eqref{eq:M2} we have
\begin{align*}
\sum_A g(\tilde{\nabla}_{\tilde{e_A}}\tilde{e_A}-\nabla '_{\tilde{e_A}}\tilde{e_A},P(Z)) &=
\sum_A g((\nabla '_{\tilde{e_A}}P)\tilde{e_A},Z)-\frac{1}{2}g((\nabla '_ZP)\tilde{e_A},\tilde{e_A})\\
&=\sum_A\big(\skal{\nabla '_{\tilde{e_A}}S_{\tilde{e_A}}-S_{\nabla '_{\tilde{e_A}}\tilde{e_A}}}{S_Z}
+\skal{\nabla '_{\tilde{e_A}}S_Z-S_{\nabla '_{\tilde{e_A}}Z}}{S_{\tilde{e_A}}}\\
&-\skal{\nabla '_ZS_{\tilde{e_A}}-S_{\nabla '_Z\tilde{e_A}}}{S_{\tilde{e_A}}}\big)\\
&=\sum_A \skal{\nabla '_{\tilde{e_A}}S_{\tilde{e_A}}-S_{\nabla '_{\tilde{e_A}}\tilde{e_A}}}{S_Z}+g(R_{S_{\tilde{e_A}}}(\tilde{e_A}),Z)\\
&=g(S_{R_{S_{\tilde{e_A}}}(\tilde{e_A})^{\top}},S_Z)+g(R_{S_{\tilde{e_A}}}(\tilde{e_A}),Z)\\
&=\sum_A g(R_{S_{\tilde{e_A}}}(\tilde{e_A})^{\top},P(Z)).
\end{align*}
Therefore \eqref{eq:H2} holds.
\end{proof}

\begin{rem}
It seems that the correlation of minimality of $O(M,N)$ and the harmonicity of associated section of the Grassmann bundle is a more general fact, which holds for any restriction of the structure group of the principal bundle and the homogeneous bundle associated with this bundle. This correlation is studied by the author independently \cite{nie3}.
\end{rem}

\end{document}